\documentclass[
 aip,
 jmp,
 amsmath,amssymb,
 reprint,
 onecolumn,
 article-title,
 longbibliography
]{revtex4-2}

\usepackage[T1]{fontenc}
\usepackage{mathptmx}
\usepackage{amsthm,mathtools}
\usepackage{enumitem}

\allowdisplaybreaks

\newtheorem{theorem}{Theorem}
\newtheorem{proposition}[theorem]{Proposition}
\newtheorem{corollary}[theorem]{Corollary}
\newtheorem{lemma}[theorem]{Lemma}
\theoremstyle{definition}
\newtheorem{definition}[theorem]{Definition}
\theoremstyle{remark}
\newtheorem{remark}[theorem]{Remark}

\newcommand{\R}{\mathbb{R}}
\newcommand{\Kb}{\widehat K_{\beta}}

\newcommand{\1}{\mathbf{1}}

\begin{document}

\title{Strict unimodality of the exact time-domain diffuse-light response under a Robin boundary condition}

\author{Junyong Eom}
\email{eom@gunma-u.ac.jp}
\affiliation{Division of Pure and Applied Science, Graduate School of Science and Technology, Gunma University, Gunma 376-8515, Japan}

\author{Jinwan Park}
\affiliation{Department of Applied Mathematics, Kongju National University, Gongju-si, Chungcheongnam-do 32588, Republic of Korea}

\date{27 August 2026}

\begin{abstract}
Time-domain boundary measurements of diffuse light are temporal point-spread functions whose peak position is used as a robust scalar datum in diffuse optical and fluorescence imaging.  We consider the exact response generated by a single fluorescent point target in a homogeneous half-space with a Robin boundary condition.  The response is a temporal convolution of two diffusion Green-function factors containing a complementary-error-function boundary correction.  For every positive diffusion, absorption, depth, and source--target and detector--target distance parameter, and for every nonnegative Robin parameter, we prove that the normalized exact response is a generalized gamma convolution density.  Its extension by zero to the negative half-line is bell-shaped.  A differentiable small-time expansion, established by sectorial analyticity and Cauchy estimates, together with the sign-change structure of the first two derivatives, excludes tangential critical points and yields a unique peak with strict increase before it and strict decrease after it.  The argument is exact and requires no large-distance, source--detector balance, symmetry, restricted-time, or small-parameter assumption.  The same conclusion holds after convolution with an exponential fluorescence-lifetime density for every positive lifetime.  The key ingredients are an explicit Stieltjes representation of the Robin factor, hyperbolic complete monotonicity, closure of the generalized gamma convolution class, and bell-shape theory.  A derivation of the exact half-space model and the differentiated endpoint asymptotics are included in two appendices.
\end{abstract}

\keywords{diffuse light, time-domain boundary measurement, Robin boundary condition, peak time, strict unimodality, bell-shaped density, hyperbolic complete monotonicity, generalized gamma convolution}

\maketitle

\noindent\textit{Preprint submitted to the Journal of Mathematical Physics.}\par\smallskip

\section{Introduction}

Time-domain boundary measurements of diffuse light record the temporal point-spread function produced by a short optical pulse in a highly scattering medium.  In the diffusion regime, this function is generated by a parabolic boundary-value problem and carries information about optical coefficients, propagation distances, and, in fluorescence measurements, the location and lifetime of fluorescent agents.  Robin boundary conditions arise from refractive-index mismatch at the tissue boundary and are standard in diffusion models of photon transport \cite{HaskellSvaasandTsayFengMcAdamsTromberg1994,MachidaNakamura2020}.  The broader mathematical and computational foundations of diffuse optical tomography are reviewed in \cite{Arridge1999,ArridgeSchotland2009}, while time-resolved forward and inverse formulations include finite-element, transform, temporal-moment, and feature-data approaches \cite{GaoNiuZhaoZhang1998,ZhangGaoHeZhao2008,HervePuszkaPlanatChretienDinten2012,MozumderTarvainen2020}.

We focus on a single fluorescent point target in the half-space.  After all time-independent physical constants are collected into a positive factor $C_*$, the exact zero-fluorescence-lifetime response has the convolution form
\begin{equation}\label{eq:intro-convolution}
 u_m(t)=C_*\,(h_{A_d}*h_{A_s})(t),
 \qquad t>0,
\end{equation}
where
\begin{equation}\label{eq:intro-hA}
 h_A(t)=e^{-kt}t^{-3/2}e^{-A/t}\Kb(a;t),
 \qquad A>0,
\end{equation}
and
\begin{equation}\label{eq:intro-Robin}
 \Kb(a;t)
 =1-\beta\sqrt{\pi qt}\,
 \exp\!\left(\frac{(a+2\beta qt)^2}{4qt}\right)
 \operatorname{erfc}\!\left(
 \frac{a+2\beta qt}{\sqrt{4qt}}
 \right).
\end{equation}
Here $q>0$ and $k>0$ are effective diffusion and absorption parameters, $a>0$ is the target depth, and $A_s,A_d>0$ encode the source--target and detector--target distances.  The boundary parameter satisfies $\beta\geq0$.  The value $\beta=0$ gives the Neumann case, for which $\widehat K_0\equiv1$; the case $\beta>0$ retains the genuine Robin correction.  Formula \eqref{eq:intro-convolution} is the full exact temporal response considered in this paper.  No asymptotic replacement has been made in \eqref{eq:intro-convolution}--\eqref{eq:intro-Robin}.  Appendix~\ref{app:model} derives these formulas from the Robin half-space Green function and the coupled excitation--emission point-target model; see also Refs.~\onlinecite{HaskellSvaasandTsayFengMcAdamsTromberg1994,MachidaNakamura2020,EomMachidaNakamuraNishimuraSun2023}.

\subsection*{Peak time as an inverse datum}
Among the scalar quantities extracted from a time-resolved trace, the peak time is particularly simple:
\begin{equation}\label{eq:intro-peak-time}
 t_{\rm peak}:=\operatorname*{arg\,max}_{t>0}u_m(t).
\end{equation}
Experimentally, it is obtained by locating the largest sampled fluorescence intensity, possibly after standard temporal calibration or smoothing, and therefore uses data near the dominant part of the signal.  It does not require numerical integration of the entire trace, as temporal moments do, nor the evaluation of several transformed data values.  Temporal moments, Fourier data, and Mellin--Laplace data are all useful feature measurements in time-domain optical tomography \cite{HervePuszkaPlanatChretienDinten2012,MozumderTarvainen2020}; peak-based reconstruction provides a complementary reduction to a directly interpretable time coordinate.  Peak-based data have consequently been used in experimental and numerical diffuse-light reconstruction.  In particular, the temporal position of the maximum can be independent of multiplicative intensity factors and can be extracted from the high-signal portion of the trace, features that motivate its use as a robust datum \cite{HallMaLesageWang2004,NishimuraTamura2005,RileyHassanChernomordikGandjbakhche2007}.  Peak-time reconstruction and analysis in the point-target setting were subsequently developed in \cite{SunNakamuraNishimuraJiangLiuMachida2020,SunZhang2022,ChenEomNakamuraNishimura2023,ChenEomNakamuraNishimura2026}.

The use of \eqref{eq:intro-peak-time} in an inverse problem requires a mathematical point that is easy to overlook.  The symbol $\operatorname*{arg\,max}$ denotes a single scalar datum only when the exact model response has a unique maximizer.  If several competing local or global maxima were possible, the reported peak could depend on temporal resolution, noise, preprocessing, or a convention for selecting one maximum.  The resulting parameter-to-peak-time relation would then not be intrinsically single-valued.  Thus strict unimodality is not merely a qualitative statement that a computed curve ``looks peaked''; it gives a basic well-posedness property for the reduced forward map
\[
 (q,k,a,A_s,A_d,\beta)\longmapsto t_{\rm peak}.
\]
This question is different from uniqueness for an inverse problem based on the full boundary time trace.  Full-data uniqueness and numerical inversion results are available for time-domain FDOT \cite{SunZhang2022,LiSun2026}, but they do not by themselves imply that the scalar peak-time observable of the exact Robin response is uniquely defined.

\subsection*{The solvable Neumann benchmark and the Robin difficulty}
The contrast with the Neumann case is instructive.  When $\beta=0$, the boundary correction disappears and the convolution in \eqref{eq:intro-convolution} can be evaluated explicitly by the Laplace transform, as shown in Ref.~\onlinecite{EomMachidaNakamuraNishimuraSun2023}.  Up to a positive multiplicative constant,
\begin{equation}\label{eq:intro-Neumann}
 u_m^{(0)}(t)
 =e^{-kt}t^{-3/2}
 \exp\!\left[-\frac{(\sqrt{A_d}+\sqrt{A_s})^2}{t}\right].
\end{equation}
Indeed, the elementary identity
\begin{equation}\label{eq:intro-Laplace}
 \int_0^\infty t^{-3/2}e^{-A/t}e^{-st}\,dt
 =\sqrt{\frac{\pi}{A}}e^{-2\sqrt{As}},
 \qquad A,s>0,
\end{equation}
turns the temporal convolution into a product of exponentials in $\sqrt{s}$.  If
\[
 B:=(\sqrt{A_d}+\sqrt{A_s})^2,
\]
then
\[
 \frac{d}{dt}\log u_m^{(0)}(t)
 =-k-\frac{3}{2t}+\frac{B}{t^2},
\]
and the critical-point equation is
\begin{equation}\label{eq:intro-Neumann-critical}
 kt^2+\frac32t-B=0.
\end{equation}
It has exactly one positive solution,
\begin{equation}\label{eq:intro-Neumann-peak}
 t_{\rm peak}^{(0)}
 =\frac{-3+\sqrt{9+16kB}}{4k}.
\end{equation}
Thus the exact uniqueness of the Neumann peak follows directly from a closed formula.

For a genuine Robin boundary condition, this argument is no longer available.  The complementary error-function correction \eqref{eq:intro-Robin} occurs in both propagation factors inside the convolution.  Although the response remains explicit as a one-dimensional integral, its Laplace transform does not lead to a comparably simple time-domain formula, and direct differentiation does not reduce the critical-point condition to a tractable algebraic equation.  The derivative contains several integral terms with no transparent common sign.  Positivity of the individual factors gives no immediate control over the number of zeros of the derivative of their convolution.  Consequently, the visual presence of a dominant numerical peak, or uniqueness of the maximizer of an approximate profile, does not by itself rule out additional critical points of the full response.

\subsection*{Earlier peak-time analysis}
A sequence of earlier works developed the analytical basis for using peak times in time-domain diffuse optical and fluorescence measurements.  For zero lifetime, the Neumann response was evaluated explicitly and its peak time was shown to be unique.  For Robin and Dirichlet-related boundary regimes, peak-time expressions were obtained mainly through asymptotic or approximate reductions.  In the Robin case, depending on the result under consideration, the analysis uses conditions such as a large transformed parameter, source--detector balance, or limiting boundary regimes \cite{EomMachidaNakamuraNishimuraSun2023}.  These results identify important exactly solvable and asymptotically tractable cases, but they should not be read as an unconditional strict-unimodality theorem for the full Robin convolution.

Subsequent work retained the Robin correction in a large-depth asymptotic temporal profile.  Analysis of the logarithmic derivative of that profile, equivalently of an associated cubic stationary equation, proved uniqueness of the approximate peak time.  The corresponding derivative asymptotics for the exact response were obtained in a time window scaled with the target depth and under a near-balance condition on the source--target and detector--target distances.  Numerical results support agreement between exact and approximate peak times in the corresponding regimes \cite{ChenEomNakamuraNishimura2023}.  The uniqueness result is therefore first a theorem for the asymptotic profile, with differentiated asymptotics connecting that profile to the exact response under the stated geometric and scaling hypotheses.

The same research program also produced inverse-problem results that are central to the interpretation of peak data but logically distinct from global unimodality.  Short-time asymptotic expansions were used to extract source--target--detector distance information and to study the sensitivity matrix, local identifiability, local Lipschitz stability, and iterative reconstruction for a point target \cite{EomNakamuraNishimuraSun2024}.  More recently, large-distance asymptotics yielded explicit approximate peak-time relations and a direct inversion scheme, with analyses covering zero and nonzero fluorescence lifetime \cite{ChenEomNakamuraNishimura2026}.  These works show that peak times encode useful geometric information and provide the mathematical foundation for peak-time inversion under assumptions tailored to the corresponding approximation.

The distinction relevant here is the following.  Previous works established exact peak-time uniqueness in special solvable cases, most notably the Neumann case, or uniqueness at the level of reduced asymptotic profiles.  For the genuine Robin response, the available analyses generally use large depth or distance, near balance of the source and detector geometry, restricted time scaling, limiting boundary parameters, or lifetime regimes appropriate to the asymptotic formula.  Such results do not directly provide a theorem that the full exact convolution \eqref{eq:intro-convolution} is strictly unimodal for arbitrary admissible geometry and arbitrary Robin parameter.

\subsection*{Main results}
The purpose of this paper is to establish that exact statement.  We do not infer the critical points of $u_m$ from a relative approximation.  Instead, we identify a probability-theoretic structure intrinsic to the Robin factor and the full temporal convolution.

\begin{definition}[Bell-shaped function]\label{def:bell-shaped-intro}
A nonnegative function $f\in C^\infty(\R)$ with $\lim_{|t|\to\infty}f(t)=0$ is called \emph{bell-shaped} if, for every integer $n\geq0$, the derivative $f^{(n)}$ changes sign exactly $n$ times on $\R$.
\end{definition}

\begin{theorem}[Exact generalized gamma convolution structure, bell-shape, and strict unimodality]\label{thm:intro-main}
Assume
\begin{equation}\label{eq:intro-parameter-range}
 q,k,a,A_d,A_s>0,
 \qquad \beta\geq0.
\end{equation}
Let $u_m$ be given by \eqref{eq:intro-convolution}--\eqref{eq:intro-Robin}, and set
\[
 M:=\int_0^\infty u_m(t)\,dt.
\]
Then $0<M<\infty$, and $M^{-1}u_m$ is the density of a generalized gamma convolution distribution on $[0,\infty)$.  Define the zero extension
\begin{equation}\label{eq:intro-zero-extension}
 \widetilde u_m(t):=
 \begin{cases}
  0,&t\leq0,\\
  u_m(t),&t>0.
 \end{cases}
\end{equation}
Then $\widetilde u_m\in C^\infty(\R)$ and is bell-shaped.  In particular, there is a unique $t_{\rm peak}>0$ such that
\[
 u_m'(t)>0\quad(0<t<t_{\rm peak}),
 \qquad
 u_m'(t_{\rm peak})=0,
 \qquad
 u_m'(t)<0\quad(t>t_{\rm peak}).
\]
\end{theorem}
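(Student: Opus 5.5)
The plan is to reduce the theorem to three known facts about generalized gamma convolutions (GGC) and bell-shaped densities. First, Bondesson's characterization: a density is GGC if it is hyperbolically completely monotone (HCM). Second, the GGC class is closed under convolution. Third, the Kwaśnicki--Simon theorem: a positive GGC density whose extension by zero is smooth is bell-shaped.

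\textbf{Step 1 (Stieltjes form of the Robin factor).} I first derive the Stieltjes form of $\Kb(a;t)$. Using the Laplace-type identity $\sqrt{\pi}e^{z^2}\operatorname{erfc}(z)=\int_0^\infty 2 e^{-x^2-2xz}\,dx$ with $z=(a+2\beta qt)/\sqrt{4qt}$, and then rewriting the integrand, I aim to obtain
\[
 t^{-3/2}e^{-a^2/(4qt)}\Kb(a;t)=\int_0^\infty t^{-3/2}e^{-(a+y)^2/(4qt)}\,\nu(dy),
\]
where $\nu=\delta_0-\beta e^{-\beta y}dy$, or an equivalent signed combination. I note that $A$ in $h_A$ contains $a^2/(4q)$ as a summand coming from the image term.

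\textbf{Step 2 (positive representation).} The signed form above does not directly give positivity, so I will instead pass to the Laplace transform. Using \eqref{eq:intro-Laplace}, the Robin image factor in Laplace variables becomes a ratio of the form $(\sqrt{s}-\beta\sqrt{q})/(\sqrt{s}+\beta\sqrt{q})$, possibly combined with $e^{-c\sqrt{s}}$. I then aim to express $h_A(t)e^{kt}$ as $t^{-3/2}e^{-A/t}$ times a Stieltjes-type function of $t$, that is, a function of the form $\int (1+\lambda t)^{-1}\mu(d\lambda)$ or $\int (t+\lambda)^{-1}\cdots$, with $\mu\ge0$. This gives an explicit Stieltjes representation of $\Kb(a;\cdot)$ as a function of $t$ or $1/t$.

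\textbf{Step 3 (each factor is HCM).} Next I show that each factor $h_A$ is HCM. The building blocks are standard. The function $t^{-3/2}e^{-A/t}$ is HCM, since it is a positive stable $(1/2)$ density (Lévy) times a power. Multiplication by $e^{-kt}$ preserves HCM. Stieltjes functions of $t$ of the appropriate type are HCM. The HCM class is closed under products. Hence each $h_A$ is an HCM function, and it is integrable, so after normalization it is a GGC density by Bondesson's theorem.

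\textbf{Step 4 (GGC of the response and finiteness of $M$).} Closure of the GGC class under convolution gives the GGC statement for $M^{-1}u_m$. The bound $0<M<\infty$ comes from positivity of $\Kb$ together with $0<\Kb\le1$ (the standard erfc bound $\sqrt{\pi}\,z e^{z^2}\operatorname{erfc} z<1$) and $k>0$.

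\textbf{Step 5 (smoothness and bell shape).} For smoothness of the zero extension $\widetilde u_m$ at $0$, I will show that $u_m^{(n)}(t)=O(e^{-c/t})$ as $t\downarrow0$ for every $n$. The approach is to extend $u_m$ analytically to a sector $|\arg t|<\theta$, bound it there by $e^{-c/\operatorname{Re}(1/t)\cdots}$ using the explicit convolution together with the uniform boundedness of $\Kb$ on the sector, and then apply Cauchy estimates on discs of radius proportional to $t$ to control all derivatives. Kwaśnicki--Simon then gives bell shape. In particular $u_m'$ changes sign exactly once and $u_m''$ exactly twice. To upgrade "exactly one sign change" to strict monotonicity, I must rule out an interval of zeros of $u_m'$ or a tangential zero. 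Real-analyticity on $(0,\infty)$ excludes intervals. A double zero of $u_m'$ at a point other than the sign change would force an extra sign change of $u_m''$ beyond the two allowed, given the limits $u_m'\to0$ at $0^+$ with $u_m'>0$ nearby from the small-time expansion. This yields the unique $t_{\rm peak}$.

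I expect the main obstacle to be Step 2, namely finding a genuinely nonnegative Stieltjes representation of the Robin-corrected factor so that $h_A$ is HCM. I would first try writing $\Kb$ via $\int_0^\infty e^{-u}\,\frac{(\cdot)}{(\cdot)+u}$-type formulas obtained from $e^{z^2}\operatorname{erfc}z=\frac{1}{\pi}\int_{\mathbb R}\frac{e^{-x^2}}{z^2+x^2}z\,dx$.
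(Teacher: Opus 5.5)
\textbf{The gap is in Steps 2--3.} Steps 4 and 5 are workable:
\begin{itemize}
\item The bound $\sqrt\pi\,ze^{z^2}\operatorname{erfc}z<1$ gives $\Kb(a;t)\ge a/(a+2\beta qt)>0$.
\item The sector-plus-Cauchy argument gives flatness at $0$.
\item In your tangential-zero argument, the mean value theorem already supplies points near $0$ where $u_m''>0$.
\end{itemize}
Steps 2--3, however, carry the whole theorem, and you never show that $\Kb(a;\cdot)$ is HCM. Step 2 is only a stated aim. Even if you found a nonnegative representation $\Kb(a;t)=\int(1+\lambda t)^{-1}\mu(d\lambda)$, Step 3 would still fail. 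The principle that ``Stieltjes functions of the appropriate type are HCM'' is false: the HCM class is closed under products and pointwise limits, but not under positive mixtures. For example,
\[
\tfrac12(1+t)^{-1}+\tfrac12(1+2t)^{-1}=\frac{1+\tfrac32t}{(1+t)(1+2t)}
\]
is Stieltjes. Its negative logarithm is $\log(1+t)+\log(1+2t)-\log(1+\tfrac32t)$, so its Thorin measure $\delta_1+\delta_{1/2}-\delta_{2/3}$ is signed. It is therefore not the Laplace transform of a GGC law, and by (P2) it is not HCM. A Stieltjes representation with an unidentified mixing measure gives only complete monotonicity, which does not put $h_A$ in the GGC class.

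\textbf{The missing idea} is an explicit mixing law that is itself HCM. Your signed measure $\delta_0-\beta e^{-\beta y}dy$ already leads there.
\begin{enumerate}
\item Set $F_t(y)=e^{-(a+y)^2/(4qt)}$ and use $-F_t'(y)=\frac{a+y}{2qt}F_t(y)$. Integrating by parts gives the positive form $e^{-a^2/(4qt)}\Kb(a;t)=\int_0^\infty e^{-\beta y}\frac{a+y}{2qt}F_t(y)\,dy$.
\item The substitution $x=((a+y)^2-a^2)/(4q)$ turns this into $\Kb(a;t)=\int_0^\infty t^{-1}e^{-x/t}g(x)\,dx=\mathbb E[g(tE)]$. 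Here $g(x)=e^{-\beta(\sqrt{a^2+4qx}-a)}$ and $E$ is standard exponential.
\item The function $g$ is the Laplace transform of the inverse-Gaussian density $\rho_\beta(\lambda)\propto\lambda^{-3/2}e^{-\beta^2q/\lambda-a^2\lambda/(4q)}$. This density is HCM as a product of basic HCM factors.
\item Hence $\Kb(a;t)=\mathbb E[(1+t\Lambda)^{-1}]$ is the Laplace transform of $E\Lambda$, with $\Lambda\sim\rho_\beta$ independent of $E$.
\item By (P4), $E\Lambda$ has an HCM density, so it is GGC by (P1). By (P2), its Laplace transform $\Kb(a;\cdot)$ is HCM.
\end{enumerate}
The Laplace-domain detour in your Step 2 is unnecessary. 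What makes the argument work is the HCM property of the mixing density, not the Stieltjes form alone.
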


Theorem~\ref{thm:intro-main} is exact rather than asymptotic.  It imposes no large-distance or large-depth condition, no source--detector symmetry or balance condition, no restricted time window, and no smallness or largeness assumption on $\beta$.  It also gives more than uniqueness of the maximum.  Bell-shape means that, for every $n\geq0$, the derivative $\widetilde u_m^{(n)}$ changes sign exactly $n$ times.  A sign-change statement alone does not automatically exclude additional tangential zeros of $u_m'$.  We therefore combine bell-shape with a differentiable small-time expansion, which gives $u_m'>0$ and $u_m''>0$ near the origin, and with analyticity.  These ingredients rule out tangential critical points and yield the strict derivative inequalities in the theorem.

A positive fluorescence lifetime is represented by convolution with an exponential waiting-time density.  Since this density belongs to the same convolution-stable probability class, the exact shape result persists without a lifetime restriction.

\begin{corollary}[Arbitrary positive fluorescence lifetime]\label{cor:intro-lifetime}
For every $\ell>0$, define
\begin{equation}\label{eq:intro-lifetime-response}
 U_{m,\ell}(t)
 :=\int_0^t \ell^{-1}e^{-(t-s)/\ell}u_m(s)\,ds,
 \qquad t>0.
\end{equation}
After normalization, $U_{m,\ell}$ is a generalized gamma convolution density.  Its zero extension, defined as in \eqref{eq:intro-zero-extension}, is bell-shaped, and $U_{m,\ell}$ is strictly unimodal for every $\ell>0$.
\end{corollary}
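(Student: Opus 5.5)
The plan is to reduce Corollary~\ref{cor:intro-lifetime} to Theorem~\ref{thm:intro-main} in three steps: (i) closure of the generalized gamma convolution (GGC) class under convolution, (ii) the theorem that GGC densities with suitable endpoint behaviour have bell-shaped zero extensions, and (iii) a repeat of the endpoint argument that rules out tangential critical points.

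\textbf{Normalization and the GGC property.} By Theorem~\ref{thm:intro-main}, $M^{-1}u_m$ is a GGC density with finite total mass $M$. The exponential density $e_\ell(t)=\ell^{-1}e^{-t/\ell}\1_{t>0}$ is the gamma density with shape parameter $1$ and rate $1/\ell$, so it is trivially a GGC: its Thorin measure is the unit point mass at $1/\ell$. Since $U_{m,\ell}=e_\ell*u_m$, Fubini gives
\[
\int_0^\infty U_{m,\ell}(t)\,dt=M .
\]
In Laplace variables,
\[
\mathcal L[M^{-1}U_{m,\ell}](s)=(1+\ell s)^{-1}\,\mathcal L[M^{-1}u_m](s).
\]
This is the product of two GGC Laplace transforms, so its Thorin measure is the sum of the two Thorin measures. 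Hence $M^{-1}U_{m,\ell}$ is a GGC density. The Thorin measure of $u_m$ has infinite total mass, which reflects the smooth vanishing of $u_m$ at $0$. Adding the point mass keeps that total mass infinite, so the hypotheses of the bell-shape theory used for Theorem~\ref{thm:intro-main} are preserved. Concretely, I would invoke the same bell-shape criterion the paper applies there: a GGC density is bell-shaped if its Thorin measure has infinite mass, or equivalently if the density extends to a smooth function vanishing to infinite order at $0$.

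\textbf{Smoothness at $0$ and bell-shape.} Next I would verify directly that the zero extension $\widetilde U_{m,\ell}$ lies in $C^\infty(\R)$. Differentiating under the integral,
\[
U_{m,\ell}' = u_m(t)-\ell^{-1}U_{m,\ell}(t),
\]
so every derivative of $U_{m,\ell}$ is a finite combination of the derivatives $u_m^{(j)}(t)$ and of $U_{m,\ell}$ itself. Each $u_m^{(j)}$ vanishes faster than any power as $t\downarrow0$, since $u_m$ carries the factor $e^{-(\sqrt{A_d}+\sqrt{A_s})^2/t}$ up to slowly varying terms, and $\widetilde u_m\in C^\infty$. Also $|U_{m,\ell}(t)|\le \int_0^t u_m$, which is flat at $0$. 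Bell-shape then follows from the criterion above.

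\textbf{Strictness.} The main obstacle is again to exclude tangential zeros of $U_{m,\ell}'$. Analyticity is straightforward: $u_m$ is real-analytic on $(0,\infty)$ by Theorem~\ref{thm:intro-main}, and $U_{m,\ell}$ solves the linear ODE
\[
\ell U'+U=\ell u_m ,
\]
whose coefficients are analytic, so $U_{m,\ell}$ is analytic on $(0,\infty)$ and $U_{m,\ell}'$ has isolated zeros. By bell-shape, $U_{m,\ell}'$ changes sign exactly once and $U_{m,\ell}''$ exactly twice.

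Suppose $U_{m,\ell}'$ had an extra zero $t_1$ without a sign change. Then $t_1$ would be a local extremum of $U_{m,\ell}'$, which forces a sign change of $U_{m,\ell}''$ near $t_1$. Together with the sign change of $U_{m,\ell}''$ attached to the genuine peak, and the sign pattern forced near $0$ and near $\infty$, this would require at least three sign changes of $U_{m,\ell}''$, a contradiction.

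For this count I need the endpoint signs. Near $0$, the small-time expansion gives $U_{m,\ell}(t)\sim\int_0^t u_m(s)\,ds$, and $u_m$ has exponentially small leading behaviour $\exp(-B/t)$. So $U_{m,\ell}'$ and $U_{m,\ell}''$ are both positive near $0$, which I would transfer from the differentiable expansion already established for $u_m$. Near $\infty$, $U_{m,\ell}$ decays like $e^{-\min(k,1/\ell)t}$ times a power, with the $\min(k,1/\ell)=k$ case producing $U_{m,\ell}''>0$ and $U_{m,\ell}'<0$ eventually.

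The count then gives exactly one critical point, namely the sign change, with $U_{m,\ell}'>0$ before it and $U_{m,\ell}'<0$ after it. This is the strict unimodality claimed in Corollary~\ref{cor:intro-lifetime}.
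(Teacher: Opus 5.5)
Your proposal is correct and follows the paper's proof essentially step for step: GGC closure under convolution with the shape-one gamma density $f_\ell$ (your Thorin-measure additivity is the same fact), $C^\infty$ smoothness of the zero extension from the first-order ODE and the flatness of $u_m$, Kwa\'snicki's bell-shape theorem, real analyticity from the linear ODE, and exclusion of tangential zeros of $U_{m,\ell}'$ by counting sign changes of $U_{m,\ell}''$ starting from $U_{m,\ell}'>0$, $U_{m,\ell}''>0$ near $0$. Two small points: the correct identity is $\ell U_{m,\ell}'+U_{m,\ell}=u_m$ (you lost a factor $\ell^{-1}$); and your sign information near $\infty$ is both incompletely justified (you treat only one case) and unnecessary, because the near-zero signs already suffice --- they follow from $U_{m,\ell}(t)\le (t/\ell)\,u_m(t)=o(u_m(t))$ and $u_m'/u_m\to\infty$ --- and together with $U_{m,\ell}''<0$ near the peak they produce the forbidden pattern $+,-,+,-$ for $U_{m,\ell}''$.
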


The proof is structural rather than perturbative.  We derive an exact Stieltjes representation of $\Kb(a;t)$ and prove that it is hyperbolically completely monotone.  This places each normalized factor $h_A$ in the class of generalized gamma convolution densities.  Closure of that class under convolution applies directly to \eqref{eq:intro-convolution}, and bell-shape theory controls the sign changes of all derivatives.  The differentiated endpoint expansion proved in Appendix~\ref{app:asymptotics} and a short analytic lemma then upgrade this control to a unique critical point.  Corollary~\ref{cor:intro-lifetime} follows from one further convolution with a gamma density and the corresponding endpoint signs.  Background on hyperbolically completely monotone (HCM) functions (Definition~\ref{def:HCM}) and generalized gamma convolution (GGC) distributions (Definition~\ref{def:GGC}) is given in \cite{Bondesson1992,BehmeBondesson2017}; the bell-shape step uses \cite{Kwasnicki2020}, with broader characterizations of bell-shaped functions developed in \cite{KwasnickiSimon2022}.

The remainder of the paper is organized as follows.  Section~II introduces the geometry, the exact response, its elementary analytic properties, and the explicit Neumann benchmark.  Section~III develops the Stieltjes representation of the Robin factor and records the HCM--GGC facts needed in the proof.  Section~IV proves Theorem~\ref{thm:intro-main} and Corollary~\ref{cor:intro-lifetime}, and then compares the exact conclusion with the uniqueness mechanism for a standard large-depth profile.  Appendix~\ref{app:model} derives the exact response from the Robin Green function, and Appendix~\ref{app:asymptotics} proves the differentiated small-time asymptotics.

\section{The exact temporal response}

This section isolates the forward model from the later probability argument.  We first specify the half-space geometry and the reduced parameters entering the temporal trace.  We then record positivity, integrability, smoothness, and endpoint behavior of the exact response.  The section closes with the Neumann calculation, which provides both a consistency check and a useful comparison with the non-elementary Robin case.

\subsection{Geometry and convolution factorization}
The half-space Green function used below is associated with the effective parameters $q=vD$ and $k=v\mu_a$ through the diffusion operator
\begin{equation}\label{eq:diffusion-operator}
 \partial_t-q\Delta+k
\end{equation}
and the homogeneous Robin condition $\partial_\nu G+\beta G=0$ on the planar boundary.  In the optical model, the Robin parameter represents the effective boundary interaction caused by refractive-index mismatch.  The derivation and interpretation of this boundary condition and related half-space Green functions may be found in \cite{HaskellSvaasandTsayFengMcAdamsTromberg1994,MachidaNakamura2020,EomMachidaNakamuraNishimuraSun2023}.  Appendix~\ref{app:model} records the reduction of the coupled excitation--emission point-target system to the exact convolution below.

Let
\[
 \Omega=\R^3_+,
 \qquad x_s,x_d\in\partial\Omega,
 \qquad x_c\in\Omega,
\]
where $x_s$ is the source, $x_d$ is the detector, and $x_c$ is the location of the fluorescent point target.  Write
\[
 a=(x_c)_3>0
\]
for the target depth and define the diffusive travel parameters
\begin{equation}\label{eq:parameters}
 q:=vD>0,
 \qquad k:=v\mu_a>0,
 \qquad A_s:=\frac{|x_s-x_c|^2}{4q},
 \qquad A_d:=\frac{|x_d-x_c|^2}{4q}.
\end{equation}
Because $x_s,x_d$ lie on the boundary and $x_c$ lies in the interior, $A_s,A_d>0$.  The quantities $A_s$ and $A_d$ are the diffusive travel-time parameters for the excitation and emission legs.  Their dependence on $x_c$ is the reason that a temporal feature such as $t_{\rm peak}$ can carry location information.

For $\beta\geq0$, define the Robin factor by
\begin{equation}\label{eq:Robin-factor}
 \Kb(a;t)
 :=1-\beta\sqrt{\pi qt}\,
 \exp\!\left(\frac{(a+2\beta qt)^2}{4qt}\right)
 \operatorname{erfc}\!\left(\frac{a+2\beta qt}{\sqrt{4qt}}\right),
 \qquad t>0.
\end{equation}
In the half-space Green function, the Gaussian factor $e^{-A/t}$ records diffusive travel from a boundary point to the target, while \eqref{eq:Robin-factor} accounts for the boundary interaction.  For $A>0$, introduce
\begin{equation}\label{eq:hA}
 h_A(t):=e^{-kt}t^{-3/2}e^{-A/t}\Kb(a;t),
 \qquad t>0.
\end{equation}
The zero-lifetime point-target response is the convolution already displayed in \eqref{eq:intro-convolution}.  In integral form,
\begin{align}
 u_m(t)
 ={}&C_*e^{-kt}\int_0^t ((t-s)s)^{-3/2}
 \exp\!\left(-\frac{A_d}{t-s}-\frac{A_s}{s}\right)\notag\\
 &\hspace{27mm}\times
 \Kb(a;t-s)\Kb(a;s)\,ds.
 \label{eq:exact-response-integral}
\end{align}
The variable $s$ is the excitation travel time to the target, and $t-s$ is the emission travel time from the target to the detector.  This decomposition is also the source of the convolution structure used later.

\subsection{Elementary analytic properties}
The following lemma collects facts that are needed when the probability classification is converted into a statement about derivatives on the whole real line.

\begin{lemma}[Positivity, integrability, flatness, and analyticity]\label{lem:basic-properties}
Let $A>0$ and assume $q,k,a>0$ and $\beta\geq0$.  Then $h_A$ is positive and belongs to $C^\infty(0,\infty)\cap L^1(0,\infty)$.  The exact response $u_m$ is real analytic on $(0,\infty)$; in fact, it extends analytically to the half-plane $\{t\in\mathbb C:\operatorname{Re}t>0\}$.  Moreover,
\begin{equation}\label{eq:flatness-hA}
 \lim_{t\downarrow0}h_A^{(n)}(t)=0
 \qquad\text{for every }n\geq0,
\end{equation}
and $h_A^{(n)}(t)$ decays exponentially as $t\to\infty$.  Consequently, the zero extension of $h_A$ belongs to $C^\infty(\R)$, and the same is true of the zero extension of $u_m$.
\end{lemma}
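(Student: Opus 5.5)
The plan is to reduce everything to the Robin factor $\Kb(a;t)$, by showing that it is bounded, analytic in the right half-plane, and positive on $(0,\infty)$. With the substitution $z=(a+2\beta qt)/\sqrt{4qt}$, write $\Kb(a;t)=1-\beta\sqrt{\pi q t}\,\mathrm{erfcx}(z)$ with $\mathrm{erfcx}(z)=e^{z^2}\operatorname{erfc}(z)$. Two standard facts about this function will be used. First, $\mathrm{erfcx}(z)=\frac{2}{\sqrt\pi}\int_0^\infty e^{-2zu-u^2}\,du$. Second, $\sqrt\pi\, z\,\mathrm{erfcx}(z)<1$ for $z>0$.

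For positivity we use $z\geq\beta\sqrt{qt}$. Since $z\,\mathrm{erfcx}(z)$ is increasing, $\sqrt\pi\,\beta\sqrt{qt}\,\mathrm{erfcx}(z)\le\sqrt\pi\, z\,\mathrm{erfcx}(z)<1$, so $\Kb>0$. Equivalently, $\Kb(a;t)=\frac{2}{\sqrt\pi}\int_0^\infty e^{-2zu-u^2}(\ldots)\,du$ after an integration by parts, and the integrand is visibly positive. In the same way we get $0<\Kb\le1$. The factor $e^{-kt}t^{-3/2}e^{-A/t}$ is positive, so $h_A>0$. Integrability follows from $h_A\le e^{-kt}t^{-3/2}e^{-A/t}$, which is integrable at both ends.

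For analyticity we note that $t\mapsto z(t)$ is analytic on $\{\operatorname{Re}t>0\}$ with $\operatorname{Re}z>0$ there. Indeed, $z=\frac{a}{2\sqrt{q}}t^{-1/2}+\beta\sqrt q\,t^{1/2}$, and both powers have positive real part for $|\arg t|<\pi/2$. Hence $\mathrm{erfcx}(z)$, which is entire, is analytic in $t$, and so is $h_A$. On any closed subsector $|\arg t|\le\pi/2-\delta$ we will bound $|\mathrm{erfcx}(z)|$ uniformly, using $|\mathrm{erfcx}(z)|\le C$ for $\operatorname{Re}z\ge0$. This gives $|h_A(t)|\le C|t|^{-3/2}(1+|t|^{1/2})e^{-k\operatorname{Re}t-A\operatorname{Re}(1/t)}$. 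For the convolution we rotate the integration path: $u_m(t)=C_*t\int_0^1h_{A_d}(t(1-\sigma))h_{A_s}(t\sigma)\,d\sigma$. This is analytic in $t$ for $\operatorname{Re}t>0$, because the integrand is analytic and dominated, thanks to the factors $e^{-A\operatorname{Re}(1/(t\sigma))}$, which kill the endpoint singularities. Morera's theorem or differentiation under the integral sign then gives the analytic extension.

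For flatness and decay we apply Cauchy estimates on discs $D(t,t/2)$. Since $\operatorname{Re}(1/w)\ge c/t$ there, we get $|h_A^{(n)}(t)|\le n!\,(t/2)^{-n}\sup_{D}|h_A|\le C_n t^{-n-3/2}e^{-cA/t}\to0$ as $t\downarrow0$. Similarly, for large $t$ we use discs of radius $1$ to obtain $|h_A^{(n)}(t)|\le C_ne^{-kt/2}$. Together with $h_A^{(n)}\to0$ at $0^+$, this shows that the zero extension is $C^\infty(\R)$. For $u_m$, the same Cauchy-estimate argument applies to the analytic extension, once we have the bound $|u_m(w)|\le Ce^{-c/|w|}$ for small $w$ in a sector. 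This bound holds because $\operatorname{Re}(A_d/(w(1-\sigma))+A_s/(w\sigma))\ge c(A_d+A_s)/|w|$. Exponential decay follows from the factor $e^{-k\operatorname{Re}w}$.

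The main obstacle is the uniform bound on $\mathrm{erfcx}$ for complex arguments, uniformly in $t$ on sectors and discs. I would obtain it from the integral representation, which gives $|\mathrm{erfcx}(z)|\le\mathrm{erfcx}(\operatorname{Re}z)\le1$ for $\operatorname{Re}z\ge0$. A secondary point is keeping the discs inside the half-plane so that $\operatorname{Re}(1/w)\gtrsim1/t$ holds there.
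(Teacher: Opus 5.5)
Your proof is correct, but it takes a different route from the paper's.

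\textbf{The paper's route.} The paper gets positivity and the small-time derivative bounds from the Stieltjes representation of Proposition~\ref{prop:stieltjes}, a forward reference. It bounds each derivative of $\Kb$ by $n!\int_0^\infty\lambda^n\rho_\beta(\lambda)\,d\lambda<\infty$. A Leibniz expansion of $t^{-3/2}e^{-A/t}\Kb(a;t)$ then gives negative powers of $t$ times $e^{-A/t}$. Analyticity is deferred to Lemma~\ref{lem:sectorial-analyticity}, which uses $\Kb(a;t)=\int_0^\infty e^{-u}g(tu)\,du$ with $|g|\le e^{\beta a}$. Smoothness of $\widetilde u_m$ is obtained simply as the convolution of two $C^\infty$ zero extensions.

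\textbf{Your route.} You work directly with the entire function $\mathrm{erfcx}$. Positivity comes from $\sqrt\pi\,z\,\mathrm{erfcx}(z)<1$ together with $z\ge\beta\sqrt{qt}$. Your integrated-by-parts version, $\Kb(a;t)=\int_0^\infty 2\bigl(\tfrac{a}{2\sqrt{qt}}+u\bigr)e^{-2zu-u^2}\,du$, is exactly the paper's (\ref{eq:positive-representation}) after the substitution $y=2\sqrt{qt}\,u$. You then use one complex bound, $|\Kb(a;w)|\le 1+\beta\sqrt{\pi q}\,|w|^{1/2}$ on $\operatorname{Re}w>0$. Combined with Cauchy estimates on $\overline D(t,t/2)$, where $\operatorname{Re}(1/w)\ge 2/(9t)$, and on unit discs, it gives flatness, exponential decay and analyticity together. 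It also yields the explicit bound $|h_A^{(n)}(t)|\le C_n t^{-n-3/2}e^{-2A/(9t)}$.

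\textbf{Comparison.} Your route makes the lemma self-contained at the point where it is stated, with no forward references. The paper's route reuses the Stieltjes and moment structure that it needs anyway for the HCM argument.

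\textbf{Two small points.}
\begin{itemize}
\item Monotonicity of $z\,\mathrm{erfcx}(z)$ is not needed. Only $\mathrm{erfcx}>0$ and $\beta\sqrt{qt}\le z$ are used.
\item For the sectorial bound $|u_m(w)|\le Ce^{-c/|w|}$, the lower bound $\operatorname{Re}(\ldots)\ge c(A_d+A_s)/|w|$ does not by itself make $\int_0^1[\sigma(1-\sigma)]^{-3/2}(\ldots)\,d\sigma$ converge. Split $e^{-\operatorname{Re}(1/w)\phi(\sigma)}$ into two halves. Use one half for the factor $e^{-c/|w|}$, and the other, with $\operatorname{Re}(1/w)\ge\cos\delta/t_0$ on the small sector, for integrability at the endpoints. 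Alternatively, for the $C^\infty(\R)$ claim about $\widetilde u_m$, use the paper's shorter observation that convolving the two smooth zero extensions already gives it.
\end{itemize}
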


\begin{proof}
Positivity of $\Kb$ follows from the positive integral representation proved in Proposition~\ref{prop:stieltjes} below.  Smoothness on $(0,\infty)$ is immediate from \eqref{eq:Robin-factor}.  Moreover, the Stieltjes representation \eqref{eq:stieltjes-form} shows that every derivative of $\Kb(a;t)$ remains bounded as $t\downarrow0$: all positive moments of $\rho_\beta$ are finite because of the exponential decay in \eqref{eq:rho}.  Thus every derivative of $t^{-3/2}e^{-A/t}\Kb(a;t)$ is bounded by a negative power of $t$ multiplied by $e^{-A/t}$.  The exponential $e^{-A/t}$ dominates every power of $t^{-1}$, which gives \eqref{eq:flatness-hA}.  As $t\to\infty$, the factor $e^{-kt}$ dominates the algebraic factors and their derivatives.  Thus $h_A\in L^1(0,\infty)$ and its zero extension is smooth.  Analytic continuation of $u_m$ to $\operatorname{Re}t>0$, including an explicit endpoint-dominating function after the scaling $s=t\theta$, is proved in Lemma~\ref{lem:sectorial-analyticity} of Appendix~\ref{app:asymptotics}.  The convolution of the two smooth zero extensions is smooth on $\R$, proving the remaining assertion for $u_m$.
\end{proof}

\begin{corollary}[Existence of at least one peak]\label{cor:peak-existence}
Under the assumptions of Theorem~\ref{thm:intro-main}, the exact response satisfies
\[
 u_m(t)>0\quad(t>0),
 \qquad
 \lim_{t\downarrow0}u_m(t)=\lim_{t\to\infty}u_m(t)=0.
\]
Hence $u_m$ attains a global maximum at some point of $(0,\infty)$.
\end{corollary}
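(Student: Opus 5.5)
The plan is to deduce Corollary~\ref{cor:peak-existence} directly from Lemma~\ref{lem:basic-properties}, working with the integral form \eqref{eq:exact-response-integral}.

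\textbf{Positivity.} By Lemma~\ref{lem:basic-properties}, both factors $h_{A_d}$ and $h_{A_s}$ are positive on $(0,\infty)$; this positivity ultimately comes from the representation of $\Kb$ in Proposition~\ref{prop:stieltjes}. For fixed $t>0$, the integrand in \eqref{eq:exact-response-integral} is therefore strictly positive on $(0,t)$. It is also continuous there. Since $C_*>0$, this gives $u_m(t)>0$.

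\textbf{Behaviour as $t\downarrow0$.} I will bound $u_m$ through the zero extensions. By \eqref{eq:flatness-hA} with $n=0$ and the exponential decay at infinity, each $h_A$ is bounded on $(0,\infty)$; write $\|h_A\|_\infty$ for this bound. Then
\[
 0<u_m(t)=C_*\int_0^t h_{A_d}(t-s)h_{A_s}(s)\,ds\le C_*\,t\,\|h_{A_d}\|_\infty\|h_{A_s}\|_\infty\to0 .
\]
Alternatively, this limit already follows from the statement in Lemma~\ref{lem:basic-properties} that the zero extension $\widetilde u_m$ is continuous on $\R$ and vanishes on $(-\infty,0]$.

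\textbf{Behaviour as $t\to\infty$.} Here I split the convolution integral at $s=t/2$. On $[0,t/2]$ we have $t-s\ge t/2$, so
\[
 \int_0^{t/2} h_{A_d}(t-s)h_{A_s}(s)\,ds\le \Bigl(\sup_{r\ge t/2}h_{A_d}(r)\Bigr)\|h_{A_s}\|_{L^1}.
\]
The right-hand side tends to $0$, because $h_{A_d}$ decays exponentially and $h_{A_s}\in L^1$. The piece on $[t/2,t]$ is handled the same way with the roles of the two factors exchanged. A simpler route is also available: each $h_A(t)\le C e^{-kt}$ because $\Kb$ is bounded, so $u_m(t)\le C' t e^{-kt}$.

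\textbf{Existence of a maximum.} Fix any $t_0>0$. By the two limits, there exist $0<\delta<t_0<R$ such that $u_m<u_m(t_0)$ on $(0,\delta)\cup(R,\infty)$. By Lemma~\ref{lem:basic-properties}, $u_m$ is continuous (indeed analytic), so it attains a maximum on the compact interval $[\delta,R]$. That maximum is at least $u_m(t_0)$, and hence it is a global maximum over $(0,\infty)$.

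\textbf{Main obstacle.} The only delicate input is a uniform bound on $\Kb$ (equivalently, boundedness of $h_A$). I would take it from the Stieltjes representation, which gives $0<\Kb\le1$. For this corollary, the bound $\Kb\le1$ can also be read off directly from \eqref{eq:Robin-factor}, since the subtracted term is nonnegative when $\beta\ge0$.
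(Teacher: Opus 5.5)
Your proof is correct and follows the paper's route: positivity of a convolution of positive factors, endpoint limits from Lemma~\ref{lem:basic-properties} via elementary convolution bounds, and a compactness argument for the maximum. You also make explicit the ``standard convolution estimates'' the paper leaves implicit, namely $u_m(t)\le C t$ as $t\downarrow0$ and $u_m(t)\le C' t e^{-kt}$ as $t\to\infty$, both obtained from $0<\Kb\le1$.
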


\begin{proof}
Positivity follows from \eqref{eq:intro-convolution}.  The endpoint limits follow
from Lemma~\ref{lem:basic-properties} and standard convolution estimates.
A positive continuous function vanishing at both endpoints attains its supremum
on a compact subinterval of $(0,\infty)$.
\end{proof}

Corollary~\ref{cor:peak-existence} proves only existence.  It gives no information about the number of local maxima.  The main theorem will supply exactly one critical point by using the global distributional structure of the normalized response.

\subsection{The Neumann case by the Laplace transform}
For $\beta=0$, the Robin factor is one and
\[
 h_A^{(0)}(t)=e^{-kt}t^{-3/2}e^{-A/t}.
\]
The explicit calculation can be recorded in the present notation as follows.

\begin{proposition}[Exact Neumann response]\label{prop:Neumann}
Let $\beta=0$ and set $B=(\sqrt{A_d}+\sqrt{A_s})^2$.  Then, for a constant $C_N>0$ independent of $t$,
\begin{equation}\label{eq:Neumann-response}
 u_m^{(0)}(t)=C_Ne^{-kt}t^{-3/2}e^{-B/t}.
\end{equation}
The response has exactly one critical point, given by
\begin{equation}\label{eq:Neumann-peak}
 t_{\rm peak}^{(0)}
 =\frac{-3+\sqrt{9+16kB}}{4k},
\end{equation}
and this point is the strict global maximum.
\end{proposition}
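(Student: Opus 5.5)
The plan is to compute the convolution through the Laplace transform, since both factors have the elementary transform \eqref{eq:intro-Laplace}. With $\beta=0$ we have $h_A^{(0)}(t)=e^{-kt}g_A(t)$, where $g_A(t)=t^{-3/2}e^{-A/t}$. The exponential weight factors out of the convolution:
\[
(e^{-k\cdot}g_{A_d})*(e^{-k\cdot}g_{A_s})(t)=e^{-kt}\,(g_{A_d}*g_{A_s})(t).
\]
So it suffices to identify $g_{A_d}*g_{A_s}$. By \eqref{eq:intro-Laplace}, for $s>0$,
\[
\mathcal L[g_{A_d}*g_{A_s}](s)=\frac{\pi}{\sqrt{A_dA_s}}\,e^{-2(\sqrt{A_d}+\sqrt{A_s})\sqrt{s}}=\frac{\pi}{\sqrt{A_dA_s}}\,e^{-2\sqrt{Bs}}.
\]
Applying \eqref{eq:intro-Laplace} once more with $A=B$, this equals $\sqrt{\pi/(A_dA_s)}\,\sqrt{B}\;\mathcal L[g_B](s)$.

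All functions involved are positive and integrable on $(0,\infty)$, and their transforms agree for all $s>0$. By uniqueness of the Laplace transform, the two continuous functions coincide pointwise. This gives \eqref{eq:Neumann-response} with
\[
C_N=C_*\sqrt{\pi B/(A_dA_s)}>0.
\]

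For the critical point, we work with the logarithmic derivative
\[
\frac{d}{dt}\log u_m^{(0)}(t)=-k-\frac{3}{2t}+\frac{B}{t^2}=\frac{-(kt^2+\tfrac32 t-B)}{t^2}.
\]
Since $u_m^{(0)}>0$, the derivative $(u_m^{(0)})'$ has the sign of $-p(t)$, where $p(t)=kt^2+\tfrac32 t-B$.

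The polynomial $p$ is a convex quadratic with $p(0)=-B<0$ and $p(t)\to+\infty$. Its two real roots have product $-B/k<0$, so exactly one root is positive. The quadratic formula gives that root as \eqref{eq:Neumann-peak}. Moreover, $p<0$ on $(0,t^{(0)}_{\rm peak})$ and $p>0$ beyond it.

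It follows that $u_m^{(0)}$ is strictly increasing before $t^{(0)}_{\rm peak}$ and strictly decreasing after it. Hence $t^{(0)}_{\rm peak}$ is the unique critical point and the strict global maximum.

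The only step needing care is the identification via Laplace inversion. Everything is explicit there, so the expected obstacle is merely bookkeeping of the constant $C_N$.
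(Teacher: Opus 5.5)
Your proposal is correct and follows essentially the same route as the paper. You Laplace-transform both factors via \eqref{eq:intro-Laplace}, invert the same standard pair to obtain \eqref{eq:Neumann-response}, and read off the unique critical point from the logarithmic derivative and the quadratic \eqref{eq:intro-Neumann-critical}. The differences are only cosmetic: you factor out $e^{-kt}$ before transforming rather than shifting $s\mapsto s+k$, and you record the explicit constant $C_N=C_*\sqrt{\pi B/(A_dA_s)}$, which agrees with \eqref{eq:stable-convolution}.
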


\begin{proof}
For $s>0$, identity \eqref{eq:intro-Laplace} gives
\[
 \mathcal L\{h_A^{(0)}\}(s)
 =\sqrt{\frac{\pi}{A}}\,
 e^{-2\sqrt{A(s+k)}}.
\]
The transform of the convolution is therefore a positive constant times
\[
 e^{-2(\sqrt{A_d}+\sqrt{A_s})\sqrt{s+k}}.
\]
Inverting the standard Laplace pair in \eqref{eq:intro-Laplace} gives \eqref{eq:Neumann-response}.  Differentiating its logarithm yields
\[
 \frac{(u_m^{(0)})'(t)}{u_m^{(0)}(t)}
 =-k-\frac{3}{2t}+\frac{B}{t^2}.
\]
The stationary equation is \eqref{eq:intro-Neumann-critical}, which has the unique positive root \eqref{eq:Neumann-peak}.  The logarithmic derivative is positive near zero and negative for large $t$, so the critical point is a strict global maximum.
\end{proof}

The calculation above is the exact mechanism available in the Neumann problem.  The remainder of the paper replaces explicit inversion by a structural argument that remains valid when $\beta>0$ and no elementary formula analogous to \eqref{eq:Neumann-response} is available.

\section{The Robin factor and the HCM--GGC framework}

The objective of this section is to convert the complementary-error-function expression \eqref{eq:Robin-factor} into a representation adapted to probability theory.  The key point is that the Robin factor is a Stieltjes transform of an explicit probability density.  This representation reveals hyperbolic complete monotonicity, which is precisely the property needed to place the factors $h_A$ in the generalized gamma convolution class.

\subsection{Complete monotonicity, HCM functions, and GGC laws}
We begin with the minimum background needed for the proof.  Detailed accounts can be found in \cite{Bondesson1992,SchillingSongVondracek2012,BehmeBondesson2017}.

\begin{definition}\label{def:CM-Stieltjes}
A function $f:(0,\infty)\to[0,\infty)$ is \emph{completely monotone} if $f\in C^\infty(0,\infty)$ and
\[
 (-1)^nf^{(n)}(t)\geq0
 \qquad(n=0,1,2,\ldots).
\]
A function $S$ is a \emph{Stieltjes function} if it admits a representation
\[
 S(t)=c_0+\frac{c_1}{t}
 +\int_{(0,\infty)}\frac{1}{t+\lambda}\,\mu(d\lambda)
\]
with $c_0,c_1\geq0$ and a positive measure $\mu$ satisfying
\[
 \int_{(0,\infty)}\frac{1}{1+\lambda}\,\mu(d\lambda)<\infty.
\]
\end{definition}

Every Stieltjes function is completely monotone.  In our application it is more convenient to use the equivalent normalized kernel $(1+\lambda t)^{-1}$, because it is the Laplace transform of an exponential random variable with random rate or scale.

\begin{definition}\label{def:HCM}
A nonnegative function $f$ on $(0,\infty)$ is \emph{hyperbolically completely monotone} (HCM) if, for every $u>0$, the function
\[
 v\longmapsto f(uv)f(u/v)
\]
is completely monotone as a function of $w=v+v^{-1}$ for $w\in(2,\infty)$, with the value at $w=2$ understood by continuity.
\end{definition}

The class of HCM functions is closed under multiplication and pointwise limits.  The basic factors
\begin{equation}\label{eq:basic-HCM}
 t^\gamma,
 \qquad e^{-ct},
 \qquad e^{-c/t}
 \qquad(\gamma\in\R,\ c\geq0)
\end{equation}
are HCM.  These examples already account for the non-Robin part of \eqref{eq:hA}.

\begin{definition}\label{def:GGC}
A probability distribution on $[0,\infty)$ is a \emph{generalized gamma convolution} (GGC) if it is a weak limit of finite convolutions of gamma distributions.  A probability distribution on $\R$ is an \emph{extended generalized gamma convolution} (EGGC) if it is a weak limit of finite convolutions of Gaussian distributions, gamma distributions, and reflected gamma distributions.  In particular, every GGC, viewed as a distribution on $\R$ supported in $[0,\infty)$, is an EGGC.
\end{definition}

The following standard implications are the bridge between analytic and probabilistic structure.
\begin{enumerate}[label=(P\arabic*),leftmargin=2.7em]
\item An HCM probability density defines a GGC distribution; see Proposition~3 of Ref.~\onlinecite{BehmeBondesson2017}, which cites Theorem~5.1.2 of Ref.~\onlinecite{Bondesson1992}.
\item A function $\phi$ with $\phi(0+)=1$ is the Laplace transform of a GGC distribution if and only if $\phi$ is HCM; see Proposition~4 of Ref.~\onlinecite{BehmeBondesson2017} and Theorem~6.1.1 of Ref.~\onlinecite{Bondesson1992}.
\item The GGC class is closed under convolution and weak limits.  The convolution closure follows directly from the defining approximation by finite convolutions of gamma laws.
\item If $X$ and $Y$ are independent positive random variables with HCM densities, then $XY$ and $X/Y$ have HCM densities; see Proposition~1 of Ref.~\onlinecite{BehmeBondesson2017}.
\end{enumerate}
We use (P4), followed by (P1) and (P2), in the proof of Proposition~\ref{prop:stieltjes}.

\subsection{Exact representations of the Robin correction}
The next proposition is the main analytic input.  Its first formula makes positivity immediate, the second expresses the Robin factor as an exponential-scale average, and the third is the Stieltjes representation used to prove the HCM property.

\begin{proposition}[Positive, exponential-scale, and Stieltjes representations]\label{prop:stieltjes}
For $a,q,t>0$ and $\beta\geq0$,
\begin{equation}\label{eq:positive-representation}
 e^{-a^2/(4qt)}\Kb(a;t)
 =\int_0^\infty e^{-\beta y}\frac{a+y}{2qt}
 \exp\!\left(-\frac{(a+y)^2}{4qt}\right)dy.
\end{equation}
For $\beta\geq0$, one also has
\begin{equation}\label{eq:inverse-laplace-form}
 \Kb(a;t)
 =\int_0^\infty \frac{1}{t}e^{-x/t}
 \exp\!\left[-\beta\bigl(\sqrt{a^2+4qx}-a\bigr)\right]dx
\end{equation}
or, equivalently, $\Kb(a;t)=\mathbb E[g(tE)]$, where $E$ is a standard exponential random variable and
\[
 g(x):=\exp\!\left[-\beta\bigl(\sqrt{a^2+4qx}-a\bigr)\right].
\]
If $\beta>0$, then the Stieltjes representation is
\begin{equation}\label{eq:stieltjes-form}
 \Kb(a;t)=\int_0^\infty\frac{1}{1+\lambda t}
 \rho_\beta(\lambda)\,d\lambda,
\end{equation}
where
\begin{equation}\label{eq:rho}
 \rho_\beta(\lambda)
 =\frac{\beta\sqrt q}{\sqrt\pi}e^{\beta a}\lambda^{-3/2}
 \exp\!\left(-\frac{\beta^2q}{\lambda}
 -\frac{a^2\lambda}{4q}\right),
 \qquad \lambda>0.
\end{equation}
Moreover, $\rho_\beta$ is an HCM probability density and $t\mapsto\Kb(a;t)$ is HCM.  For $\beta>0$, the Robin factor is strictly completely monotone, strictly decreasing, and log-convex.
\end{proposition}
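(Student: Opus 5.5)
Write $r=a+2\beta qt$ and $z=r/\sqrt{4qt}$. I would prove the three representations in sequence, starting from the elementary identity for the scaled complementary error function,
\[
 \sqrt{\pi}\,e^{z^2}\operatorname{erfc}(z)=2\int_0^\infty e^{-w^2-2zw}\,dw .
\]
Substituting $w=y/\sqrt{4qt}$ gives
\[
 \beta\sqrt{\pi qt}\,e^{r^2/(4qt)}\operatorname{erfc}\!\big(r/\sqrt{4qt}\big)
 =\beta\int_0^\infty \exp\!\Big(-\frac{y^2+2ry}{4qt}\Big)dy .
\]
Multiplying by $e^{-a^2/(4qt)}$ and using $2ry=2ay+4\beta qty$ turns the integrand into $\beta e^{-\beta y}e^{-(a+y)^2/(4qt)}$. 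Integrating by parts in $y$ identifies $\beta\int_0^\infty e^{-\beta y}e^{-(a+y)^2/(4qt)}\,dy$ with $e^{-a^2/(4qt)}-\int_0^\infty e^{-\beta y}\,\partial_y\big(-e^{-(a+y)^2/(4qt)}\big)\,dy$. Since $-\partial_y e^{-(a+y)^2/(4qt)}=\frac{a+y}{2qt}e^{-(a+y)^2/(4qt)}$, this yields \eqref{eq:positive-representation}. Positivity of $\Kb$ is then immediate.

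For \eqref{eq:inverse-laplace-form}, I would change variables in \eqref{eq:positive-representation} by $x=((a+y)^2-a^2)/(4q)$. Then $dx=\frac{a+y}{2q}\,dy$ and $y=\sqrt{a^2+4qx}-a$, so the right side becomes $\int_0^\infty \frac1t e^{-a^2/(4qt)}e^{-x/t}g(x)\,dx$. Cancelling $e^{-a^2/(4qt)}$ gives the formula, and $\Kb(a;t)=\mathbb E[g(tE)]$ is just the rewriting $x=tE$.

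For \eqref{eq:stieltjes-form}, I would use that $g$ is a Laplace transform. Indeed, $e^{-\beta(\sqrt{a^2+4qx}-a)}=e^{\beta a}e^{-2\beta\sqrt q\sqrt{x+a^2/(4q)}}$. By the pair \eqref{eq:intro-Laplace} with $A=\beta^2 q$, together with a shift $s\mapsto s+a^2/(4q)$, this equals $\int_0^\infty e^{-x\lambda}\rho_\beta(\lambda)\,d\lambda$ with $\rho_\beta$ as in \eqref{eq:rho}; the constant $\beta\sqrt q/\sqrt\pi$ comes from $\sqrt{A/\pi}$. Inserting this into \eqref{eq:inverse-laplace-form} and applying Fubini (all terms are positive) gives
\[
 \int_0^\infty\frac1t\int_0^\infty e^{-x/t-x\lambda}\,dx\,\rho_\beta(\lambda)\,d\lambda=\int_0^\infty\frac{\rho_\beta(\lambda)}{1+\lambda t}\,d\lambda .
\]
Setting $x=0$ gives $g(0)=1$, so $\rho_\beta$ is a probability density.

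Next I would establish the HCM and GGC properties. The density $\rho_\beta$ is a product of the basic factors \eqref{eq:basic-HCM} times a constant, hence HCM. The right side of \eqref{eq:stieltjes-form} is $\mathbb E[\phi(t/Y)]$ with $Y\sim\rho_\beta^{-1}$-transformed, i.e. $\Kb(a;t)=\mathbb E[e^{-tE\Lambda}]$ where $\Lambda$ has density $\rho_\beta$ and $E$ is an independent standard exponential. Both densities are HCM, so by (P4) the product $E\Lambda$ has an HCM density. By (P1) that density is GGC, and by (P2) its Laplace transform $t\mapsto\Kb(a;t)$ is HCM. I expect this step to be the main obstacle: the Stieltjes representation has to be correctly read as the Laplace transform of a product of independent HCM variables.

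Finally, for the strict properties I would argue from the representation directly. Since $\rho_\beta>0$ on $(0,\infty)$, differentiating \eqref{eq:stieltjes-form} gives $(-1)^n\partial_t^n\Kb=n!\int\lambda^n(1+\lambda t)^{-n-1}\rho_\beta\,d\lambda>0$; this establishes strict complete monotonicity and strict decrease. Log-convexity follows from Cauchy--Schwarz applied to this mixture of log-convex functions, or simply because every completely monotone function is log-convex.
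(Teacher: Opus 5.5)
Your proposal is correct and follows the paper's proof essentially step for step. It uses the same integration by parts (you run it from the erfc side, while the paper starts from the integral and then completes the square), the same substitution $x=((a+y)^2-a^2)/(4q)$, and the same subordination identity (your use of \eqref{eq:intro-Laplace} with $A=\beta^2 q$ is exactly \eqref{eq:sqrt-laplace} with $c=2\beta\sqrt q$). It also uses the same (P4)$\to$(P1)$\to$(P2) chain for $E\Lambda$; just drop the garbled intermediate phrase about $\mathbb E[\phi(t/Y)]$, since the identity $\Kb(a;t)=\mathbb E[e^{-tE\Lambda}]$ that follows it is the correct statement.
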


\begin{proof}
Set
\[
 F_t(y):=\exp\!\left(-\frac{(a+y)^2}{4qt}\right).
\]
Since
\[
 -F_t'(y)=\frac{a+y}{2qt}F_t(y),
\]
integration by parts gives
\begin{align*}
 \int_0^\infty e^{-\beta y}\frac{a+y}{2qt}F_t(y)\,dy
 &=e^{-a^2/(4qt)}
 -\beta\int_0^\infty e^{-\beta y}F_t(y)\,dy.
\end{align*}
Completing the square in the final integral gives exactly the right-hand side of \eqref{eq:Robin-factor} multiplied by $e^{-a^2/(4qt)}$.  This proves \eqref{eq:positive-representation}, and in particular $\Kb(a;t)>0$.

In \eqref{eq:positive-representation}, use the change of variables
\[
 x=\frac{(a+y)^2-a^2}{4q},
 \qquad dx=\frac{a+y}{2q}\,dy.
\]
After multiplication by $e^{a^2/(4qt)}$, one obtains \eqref{eq:inverse-laplace-form}.

We next use the classical identity
\begin{equation}\label{eq:sqrt-laplace}
 e^{-c\sqrt s}
 =\frac{c}{2\sqrt\pi}\int_0^\infty
 \lambda^{-3/2}\exp\!\left(-\frac{c^2}{4\lambda}\right)
 e^{-s\lambda}\,d\lambda,
 \qquad c,s>0.
\end{equation}
Taking $c=2\beta\sqrt q$ and $s=x+a^2/(4q)$ yields
\begin{equation}\label{eq:rho-laplace}
 \exp\!\left[-\beta\bigl(\sqrt{a^2+4qx}-a\bigr)\right]
 =\int_0^\infty e^{-\lambda x}\rho_\beta(\lambda)\,d\lambda.
\end{equation}
At $x=0$, the left-hand side equals one, so $\rho_\beta$ integrates to one.  Substitution of \eqref{eq:rho-laplace} into \eqref{eq:inverse-laplace-form} and Tonelli's theorem give
\begin{align*}
 \Kb(a;t)
 &=\int_0^\infty\int_0^\infty
 \frac1t e^{-(1/t+\lambda)x}\rho_\beta(\lambda)
 \,d\lambda\,dx\\
 &=\int_0^\infty\frac{1}{1+\lambda t}
 \rho_\beta(\lambda)\,d\lambda,
\end{align*}
which is \eqref{eq:stieltjes-form}.

The density \eqref{eq:rho} is, up to a positive constant, of the form
\[
 \lambda^{-3/2}e^{-c/\lambda-d\lambda},
 \qquad c,d>0,
\]
and is HCM by the closure rules recalled above.  Let $\Lambda$ have density $\rho_\beta$, and let $E$ be an independent standard exponential random variable.  The exponential density $e^{-x}\1_{(0,\infty)}(x)$ is HCM.  By (P4), the product $E\Lambda$ has an HCM density; by (P1), its law is therefore GGC.  Conditioning on $\Lambda$ and using $\mathbb E[e^{-sE}]=(1+s)^{-1}$ for $s\geq0$, its Laplace transform is
\[
 \mathbb E[e^{-tE\Lambda}]
 =\mathbb E\left[\frac{1}{1+t\Lambda}\right]
 =\Kb(a;t).
\]
The Laplace transform of a GGC distribution is HCM, proving the HCM assertion for $\Kb$.

Finally, \eqref{eq:stieltjes-form} is a nontrivial positive mixture of the functions $(1+\lambda t)^{-1}$.  Differentiation under the integral gives
\[
 (-1)^n\frac{d^n}{dt^n}\Kb(a;t)
 =n!\int_0^\infty
 \frac{\lambda^n}{(1+\lambda t)^{n+1}}
 \rho_\beta(\lambda)\,d\lambda>0,
\]
which proves strict complete monotonicity and strict decrease.  Complete monotonicity implies log-convexity.
\end{proof}

\begin{remark}[What the representation accomplishes]\label{rem:representation-role}
The formula \eqref{eq:stieltjes-form} does not attempt to locate the mode directly.  Its role is more global: it identifies $\Kb$ as an HCM function and thereby places the entire factor \eqref{eq:hA} in a convolution-stable class.  This avoids the need to estimate the sign of the derivative of the integral in \eqref{eq:exact-response-integral} point by point.
\end{remark}

For $\beta=0$, $\widehat K_0\equiv1$ is HCM, so the structural conclusions below include the Neumann case without a separate limiting argument.

\subsection{The propagation factors are HCM densities}
We now combine the Robin representation with the elementary factors in \eqref{eq:basic-HCM}.

\begin{proposition}\label{prop:hA-HCM}
For every $A>0$, the function $h_A$ in \eqref{eq:hA} is HCM and integrable.  If
\begin{equation}\label{eq:normalization-hA}
 M_A:=\int_0^\infty h_A(t)\,dt,
 \qquad
 p_A(t):=M_A^{-1}h_A(t),
\end{equation}
then $p_A$ is an HCM probability density and hence the density of a GGC distribution.
\end{proposition}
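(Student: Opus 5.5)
The plan is to write $h_A$ as a product of HCM factors and then use the closure rules from Section~III. By \eqref{eq:hA},
\[
 h_A(t)=e^{-kt}\cdot t^{-3/2}\cdot e^{-A/t}\cdot \Kb(a;t).
\]
The first three factors appear in the list \eqref{eq:basic-HCM} of basic HCM functions, with $c=k>0$, $\gamma=-3/2$ and $c=A>0$. The last factor is HCM by Proposition~\ref{prop:stieltjes} when $\beta>0$. When $\beta=0$ it is the constant $1$, which is trivially HCM; this is the observation recorded after Remark~\ref{rem:representation-role}. Since the HCM class is closed under multiplication, $h_A$ is HCM. No further work is needed at this step; one only checks that the definition of HCM is applied to a nonnegative function, which holds because $h_A>0$ by Lemma~\ref{lem:basic-properties}.

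For integrability, Lemma~\ref{lem:basic-properties} already gives $h_A\in L^1(0,\infty)$ and $h_A>0$, so $0<M_A<\infty$. I would also indicate the direct bound. From \eqref{eq:stieltjes-form}, $0<\Kb(a;t)\le\int_0^\infty\rho_\beta=1$, and from \eqref{eq:positive-representation} the same bound holds for $\beta=0$. Hence
\[
 0<h_A(t)\le e^{-kt}t^{-3/2}e^{-A/t},
\]
and the right-hand side is integrable on $(0,\infty)$: $e^{-A/t}$ kills the singularity at $0$ and $e^{-kt}$ gives decay at $\infty$. Indeed, by \eqref{eq:intro-Laplace}, its integral equals $\sqrt{\pi/A}\,e^{-2\sqrt{Ak}}$. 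Positivity of $M_A$ follows from strict positivity of $h_A$.

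Multiplying an HCM function by the positive constant $M_A^{-1}$ keeps it HCM, because positive constants are HCM (the case $\gamma=0$ in \eqref{eq:basic-HCM}, scaled), or simply because the defining product $f(uv)f(u/v)$ is multiplied by $M_A^{-2}>0$. Therefore $p_A$ is an HCM probability density, and by (P1) its law is a GGC.

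I expect no genuine obstacle, since the hard analytic input, the HCM property of $\Kb$, is already supplied by Proposition~\ref{prop:stieltjes}. The only point needing care is to make sure the closure under products and the list \eqref{eq:basic-HCM} cover every factor, including the uniform treatment of $\beta=0$ and $\beta>0$. I would handle $\beta=0$ explicitly, since the Stieltjes representation \eqref{eq:stieltjes-form} is stated only for $\beta>0$.
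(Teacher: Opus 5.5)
Your proposal is correct and follows essentially the same route as the paper. You factor $h_A$ into $e^{-kt}$, $t^{-3/2}$, $e^{-A/t}$ and the Robin factor (HCM by Proposition~\ref{prop:stieltjes} for $\beta>0$, identically one for $\beta=0$), then use closure under products, take integrability from Lemma~\ref{lem:basic-properties}, normalize, and invoke (P1). Your extra direct bound $0<\Kb(a;t)\le 1$, which yields $M_A\le\sqrt{\pi/A}\,e^{-2\sqrt{Ak}}$, is a correct supplement that the paper does not need.
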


\begin{proof}
The functions $e^{-kt}$, $t^{-3/2}$, and $e^{-A/t}$ are HCM.  Proposition~\ref{prop:stieltjes} shows that $\Kb(a;\cdot)$ is HCM for $\beta>0$, while it is identically one for $\beta=0$.  Closure of the HCM class under multiplication proves that $h_A$ is HCM.  Integrability and positivity were established in Lemma~\ref{lem:basic-properties}.  Normalization preserves HCM, and an HCM probability density defines a GGC law.
\end{proof}

Proposition~\ref{prop:hA-HCM} is the point at which the analytic half-space kernel enters the GGC class.  The remaining proof uses only closure under convolution and the smooth bell-shape theorem.

\section{Exact bell-shape, unimodality, and fluorescence lifetime}

This section assembles the preceding ingredients.  First, the exact zero-lifetime response is normalized and identified as a GGC density.  Second, smoothness of its zero extension allows the bell-shape theorem to be applied.  Third, the same argument is extended to every positive fluorescence lifetime.  We finish by comparing the exact conclusion with the stationary equation of a commonly used asymptotic profile.

\subsection{GGC classification and bell-shape}
Let $M_A$ and $p_A$ be given by \eqref{eq:normalization-hA}.  From \eqref{eq:intro-convolution},
\begin{equation}\label{eq:normalized-convolution}
 \frac{u_m(t)}{C_*M_{A_d}M_{A_s}}
 =(p_{A_d}*p_{A_s})(t).
\end{equation}
Each factor on the right is a GGC density by Proposition~\ref{prop:hA-HCM}.  Since the GGC class is closed under convolution, the right-hand side is a GGC density.  This proves the distributional assertion of Theorem~\ref{thm:intro-main} and identifies
\[
 M=C_*M_{A_d}M_{A_s}.
\]

The bell-shape step uses the EGGC consequence of Kwa\'snicki's Theorem~1.1 and the discussion following it, together with the smoothness implication in Sec.~2 of Ref.~\onlinecite{Kwasnicki2020}: every EGGC is weakly bell-shaped, and a nonzero weakly bell-shaped density that has a $C^\infty(\R)$ version vanishing at $\pm\infty$ is bell-shaped in the sense of Definition~\ref{def:bell-shaped-intro}.  Smoothness is not automatic for an arbitrary one-sided GGC density, so this hypothesis must be checked for the response itself.

By \eqref{eq:normalized-convolution}, the normalized law of $u_m$, viewed as a distribution on the real line supported in $[0,\infty)$, is a GGC and hence an EGGC by Definition~\ref{def:GGC}.  Lemma~\ref{lem:basic-properties} verifies directly that its density has the $C^\infty(\R)$ version $M^{-1}\widetilde u_m$, flat at the origin and vanishing at both ends.  Kwa\'snicki's theorem therefore applies, and $\widetilde u_m$ is bell-shaped.  In particular, $u_m'$ changes sign exactly once and $u_m''$ changes sign exactly twice.

The last statement gives ordinary unimodality, but a further argument is needed to exclude zeros of $u_m'$ at which no sign change occurs.  We now supply the endpoint information required for that step.

\subsection{Small-time derivative signs}
Set
\begin{equation}\label{eq:L}
 L:=\sqrt{A_d}+\sqrt{A_s}>0.
\end{equation}
The basic convolution identity
\begin{align}
 &\int_0^t (t-s)^{-3/2}s^{-3/2}
 \exp\!\left(-\frac{A_d}{t-s}-\frac{A_s}{s}\right)ds \notag\\
 &\qquad =\frac{\sqrt\pi\,L}{\sqrt{A_dA_s}}\,
 t^{-3/2}\exp\!\left(-\frac{L^2}{t}\right)
 \label{eq:stable-convolution}
\end{align}
follows by multiplying the two Laplace transforms in \eqref{eq:intro-Laplace}.  With $g$ as in Proposition~\ref{prop:stieltjes}, Lemma~\ref{lem:Robin-sector} proves the uniform estimate
\[
 g(x)=1-\frac{2\beta q}{a}x+O(x^2).
\]
The exponential-scale representation \eqref{eq:inverse-laplace-form}, together with
\[
 \int_0^\infty\frac1t e^{-x/t}x\,dx=t,
 \qquad
 \int_0^\infty\frac1t e^{-x/t}x^2\,dx=2t^2,
\]
therefore gives
\begin{equation}\label{eq:Robin-small-time}
 \Kb(a;t)=1-\frac{2\beta q}{a}t+O(t^2),
 \qquad t\downarrow0,
\end{equation}
and, uniformly for $0\leq s\leq t$,
\begin{equation}\label{eq:Robin-product-small-time}
 \Kb(a;s)\Kb(a;t-s)
 =1-\frac{2\beta q}{a}t+O(t^2).
\end{equation}
Combining \eqref{eq:exact-response-integral}, \eqref{eq:stable-convolution}, and \eqref{eq:Robin-product-small-time} yields
\begin{equation}\label{eq:um-small-time}
 u_m(t)=C_0t^{-3/2}e^{-L^2/t}\bigl(1+O(t)\bigr),
 \qquad t\downarrow0,
\end{equation}
for a constant $C_0>0$.  The differentiated form of this expansion is not inferred by formally differentiating an $O(t)$ remainder.  Lemmas~\ref{lem:Robin-sector}, \ref{lem:sectorial-analyticity}, and \ref{lem:Cauchy-derivatives} in Appendix~\ref{app:asymptotics} establish a uniform sectorial expansion and use Cauchy estimates to prove
\begin{equation}\label{eq:um-logder-small}
 \frac{u_m'(t)}{u_m(t)}
 =\frac{L^2}{t^2}-\frac{3}{2t}+O(1),
 \qquad t\downarrow0,
\end{equation}
and
\begin{equation}\label{eq:um-second-small}
 \frac{u_m''(t)}{u_m(t)}
 =\frac{L^4}{t^4}+O(t^{-3}),
 \qquad t\downarrow0.
\end{equation}
Consequently, there exists $\varepsilon>0$ such that
\begin{equation}\label{eq:initial-signs}
 u_m'(t)>0,
 \qquad
 u_m''(t)>0,
 \qquad 0<t<\varepsilon.
\end{equation}

\subsection{Exclusion of tangential critical points}
The following elementary lemma isolates the final step from bell-shape to strict unimodality.

\begin{lemma}[Bell-shape and strict unimodality]\label{lem:tangential-zero}
Let $f$ be positive and real analytic on $(0,\infty)$.  Suppose that its extension by zero is bell-shaped and that
\[
 f'(t)>0,
 \qquad f''(t)>0
\]
for all sufficiently small $t>0$.  Then $f'$ has exactly one zero.  It is positive before that zero and negative after it.
\end{lemma}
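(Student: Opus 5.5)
The plan is to combine the exact sign-change count from bell-shape with analyticity, which makes every zero of $f'$ isolated. Tangential zeros are then excluded by a counting argument on $f''$.

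Let $\widetilde f$ be the zero extension. It vanishes identically on $(-\infty,0]$, so sign changes of $\widetilde f'$ and $\widetilde f''$ can only occur in $(0,\infty)$. Bell-shape gives exactly one sign change of $f'$ and exactly two of $f''$ on $(0,\infty)$.

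\emph{Step 1: sign pattern of $f'$.} Since $f'>0$ near $0$, and $f'$ changes sign exactly once, there is a point $t_0$ at which the sign switches. Then $f'\ge0$ on $(0,t_0)$ and $f'\le0$ on $(t_0,\infty)$. Moreover $f'\not\equiv0$ on any interval, since otherwise analyticity would force $f'\equiv0$, contradicting $f'>0$ near $0$. Hence the zeros of $f'$ are isolated, and $f'(t_0)=0$ by continuity.

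\emph{Step 2: sign pattern of $f''$.} Since $f''>0$ near $0$, the two sign changes of $f''$ give points $0<r_1<r_2$ with $f''\ge0$ on $(0,r_1)$, $f''\le0$ on $(r_1,r_2)$ and $f''\ge0$ on $(r_2,\infty)$. Again the zeros of $f''$ are isolated by analyticity.

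On $(0,r_1)$, $f'$ is strictly increasing, starting from positive values, so $f'>0$ there; hence $t_0>r_1$. On $(r_2,\infty)$, $f'$ is strictly increasing. Since $f'\le0$ eventually, $f'$ is negative on that whole interval. Indeed, if $f'(t_1)\ge0$ for some $t_1>r_2$, strict increase would make $f'>0$ on $(t_1,\infty)$, contradicting the single sign change from $+$ to $-$ (and $\widetilde f'\to$ sign pattern). This gives $t_0\le r_2$. On $[r_1,r_2]$, $f'$ is strictly decreasing, so it has at most one zero there, and that zero must be $t_0$.

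\emph{Step 3: excluding other zeros.} Any other zero $z$ of $f'$ would lie in $(0,r_1]$ or $[r_2,\infty)$. The first is impossible: $f'>0$ on $(0,r_1)$ and $f'$ is strictly increasing up to $r_1$, so $f'(r_1)>0$. For the second, strict increase on $[r_2,\infty)$ allows at most one zero there. A zero at $z>r_2$ would make $f'>0$ after $z$, which is impossible as in Step 2. The only remaining case is $z=r_2=t_0$, which is the same zero.

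So $t_0$ is the unique zero, with $f'>0$ on $(0,t_0)$ and $f'<0$ on $(t_0,\infty)$.

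\emph{Main obstacle.} The delicate point is the claim that $f'$ cannot stay positive on $(r_2,\infty)$. This needs $f'$ to actually become negative after its sign change, which the bell-shape count supplies. It also relies on the bell-shape sign changes being genuine, with strict signs occurring on each interval. Both facts follow from isolation of zeros for analytic functions.
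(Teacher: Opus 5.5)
Your proof is correct, but it takes a different route from the paper's. The paper argues locally at the zeros of $f'$. Analyticity gives each zero finite order. The sign-changing zero $m$ has odd order, so $f''<0$ on a punctured neighborhood of $m$. Any further zero would be an even-order tangency at which $f''$ changes sign. Combined with $f''>0$ near $0$, this yields four increasing points where $f''$ has signs $+,-,+,-$, i.e.\ three sign changes of $f''$, a contradiction.

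You instead use the sign pattern $+,-,+$ of $f''$ globally. It splits $(0,\infty)$ into $(0,r_1]$, $[r_1,r_2]$, $[r_2,\infty)$, on which $f'$ is strictly increasing, strictly decreasing and strictly increasing. Then $f'>0$ on $(0,r_1]$, and $f'<0$ on $(r_2,\infty)$, since a strictly increasing function that is eventually $\le 0$ is negative throughout. Strict decrease on $[r_1,r_2]$ leaves room for exactly one zero.

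Your version avoids Taylor expansions and parity of zero orders. Analyticity enters only to stop $f''$ from vanishing on an interval, so the argument works for any smooth $f$ with that property. It also does not need the paper's mean-value argument with $f(t)\to0$: the orientation of the sign change of $f'$ already follows from $f'>0$ near $0$ and the exact count. The paper's local argument, in turn, shows explicitly how a tangential critical point would create extra sign changes of $f''$.

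Two minor presentation points. The existence of $r_1<r_2$ with the stated weak sign pattern should be derived from the definition of ``changes sign exactly twice'', for instance via $r_1=\inf\{t>0: f''(t)<0\}$. The parenthetical at the end of your Step 2 is garbled, although the argument it supports is already complete without it.
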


\begin{proof}
Bell-shape implies that $f'$ changes sign exactly once and $f''$ changes sign exactly twice.  Since $f'>0$ near zero, the analytic function $f'$ is not identically zero, and hence all of its zeros are isolated and have finite order.  Moreover, $f(t)\to0$ as $t\to\infty$, so the mean value theorem shows that $f'$ is negative somewhere.  The unique sign change of $f'$ is therefore from positive to negative; let $m$ denote its sign-changing zero.  Thus $f'\geq0$ on $(0,m)$ and $f'\leq0$ on $(m,\infty)$, with strict inequalities away from the isolated zeros.

At $m$, the sign change forces an odd-order zero.  Thus, for some integer $r\geq0$ and some $c_m>0$,
\[
 f'(m+x)=-c_m x^{2r+1}+O(x^{2r+2})
 \qquad (x\to0).
\]
Consequently, $f''(t)<0$ on a punctured neighborhood of $m$.

Suppose that $f'$ has a further zero $z<m$.  Then $f'$ is positive on both punctured sides of $z$, so the zero has even order with positive leading coefficient: for some $s\geq1$ and $c_z>0$,
\[
 f'(z+x)=c_zx^{2s}+O(x^{2s+1}).
\]
It follows that $f''<0$ immediately to the left of $z$ and $f''>0$ immediately to its right.  Together with $f''>0$ near zero and $f''<0$ near $m$, one can therefore choose four increasing points in $(0,m)$ at which the signs of $f''$ are, successively, $+,-,+,-$.  Thus $f''$ changes sign at least three times, contrary to bell-shape.

If a further zero $z>m$ exists, then $f'$ is negative on both punctured sides of $z$, and the even-order tangency has negative leading coefficient:
\[
 f'(z+x)=-c_zx^{2s}+O(x^{2s+1})
\]
for some $s\geq1$ and $c_z>0$.  Hence $f''>0$ immediately to the left of $z$ and $f''<0$ immediately to its right.  Using $f''>0$ near zero and $f''<0$ near $m$, we again obtain four increasing points at which the signs of $f''$ are $+,-,+,-$, so $f''$ has at least three sign changes.  This is impossible.  Therefore $m$ is the only zero of $f'$, and the asserted strict signs follow from the positive-to-negative sign change there.
\end{proof}

Apply Lemma~\ref{lem:tangential-zero} to $f=u_m$.  Real analyticity follows from Lemma~\ref{lem:sectorial-analyticity}, bell-shape was established above, and the initial signs are given by \eqref{eq:initial-signs}.  Thus $u_m'$ has exactly one zero and satisfies the strict sign inequalities in Theorem~\ref{thm:intro-main}.  This completes the proof of the zero-lifetime result.

\begin{remark}[Levels of shape information]\label{rem:three-levels}
It is useful to keep four statements separate.
\begin{enumerate}[label=(\roman*),leftmargin=2.2em]
\item A reduced or asymptotic profile has a stationary equation with one positive root.
\item The exact response has a unique global maximizer.
\item The exact response is strictly unimodal in the derivative sense: it increases strictly before its unique critical point and decreases strictly afterward.
\item The zero extension of the exact response is bell-shaped, so every derivative has the prescribed number of sign changes.
\end{enumerate}
Bell-shape gives the global sign-change count, while the endpoint signs and analyticity exclude tangential critical points and upgrade (iv) to (iii); statement (iii) then implies (ii).  Earlier asymptotic peak-time formulas often begin with (i), and differentiated estimates are needed to transfer information to the exact response in the relevant regime.
\end{remark}

\begin{remark}[Why a relative approximation is not enough]\label{rem:derivative-control}
A relation of the form
\[
 u_m(t)=u_m^{\mathrm a}(t)(1+o(1))
\]
controls function values but does not, by itself, preserve the number of critical points.  Small oscillatory perturbations can be negligible in relative amplitude while creating extra zeros of the derivative.  For example, on any compact interval where $f_0>0$, the functions $f_N(t)=f_0(t)(1+\varepsilon\sin Nt)$ satisfy $|f_N/f_0-1|\leq\varepsilon$, while the term $\varepsilon Nf_0(t)\cos Nt$ in $f_N'$ can generate many additional critical points when $N$ is large.  A perturbative proof of exact unimodality therefore requires derivative control away from the approximate mode and a nondegeneracy estimate near it.  The GGC argument bypasses this issue by classifying the exact convolution directly.
\end{remark}

\subsection{Positive fluorescence lifetime}
The fluorescence-lifetime response in \eqref{eq:intro-lifetime-response} is the convolution
\begin{equation}\label{eq:lifetime-convolution}
 U_{m,\ell}=f_\ell*u_m,
 \qquad
 f_\ell(t):=\ell^{-1}e^{-t/\ell}\1_{(0,\infty)}(t).
\end{equation}
The density $f_\ell$ is gamma with shape one and is therefore GGC.  The normalized zero-lifetime response is GGC by Theorem~\ref{thm:intro-main}; closure under convolution (P3) implies that the normalized $U_{m,\ell}$ is GGC for every $\ell>0$.

By Lemma~\ref{lem:basic-properties}, the zero extension of $u_m$ is flat at the origin.  The identity
\begin{equation}\label{eq:lifetime-ODE}
 \ell U_{m,\ell}'(t)+U_{m,\ell}(t)=u_m(t),
 \qquad U_{m,\ell}(0)=0,
\end{equation}
then shows inductively that the zero extension of $U_{m,\ell}$ is $C^\infty(\R)$.  It also vanishes at $\pm\infty$, so the same EGGC bell-shape theorem used above applies.

To obtain strict unimodality, we again verify the initial derivative signs.  The endpoint Laplace estimate obtained from \eqref{eq:um-small-time} is
\begin{equation}\label{eq:lifetime-small-time}
 U_{m,\ell}(t)
 =\frac{t^2}{\ell L^2}u_m(t)\bigl(1+O(t)\bigr),
 \qquad t\downarrow0.
\end{equation}
In particular, $U_{m,\ell}(t)/u_m(t)\to0$.  Equation \eqref{eq:lifetime-ODE} therefore gives $U_{m,\ell}'>0$ near zero.  Differentiating \eqref{eq:lifetime-ODE} and using \eqref{eq:um-logder-small} shows that $u_m'$ dominates $U_{m,\ell}'$ as $t\downarrow0$, and hence $U_{m,\ell}''>0$ near zero.  Since $u_m$ is real analytic on $(0,\infty)$, the linear ODE \eqref{eq:lifetime-ODE} implies that $U_{m,\ell}$ is real analytic there.  Lemma~\ref{lem:tangential-zero} therefore applies and proves that $U_{m,\ell}'$ has exactly one zero.  This proves Corollary~\ref{cor:intro-lifetime} for every $\ell>0$.

The lifetime result is therefore an exact closure statement, not a small- or large-lifetime approximation: adding an independent exponential waiting time preserves the GGC class and, together with the endpoint signs, preserves strict unimodality.

\subsection{Comparison with a reduced peak-time profile}
To clarify the difference between the exact theorem and asymptotic peak-time uniqueness, consider the representative large-depth profile
\begin{equation}\label{eq:asymptotic-profile}
 u_m^{\mathrm a}(t)
 =C_{\mathrm a}e^{-kt}t^{-3/2}e^{-\lambda^2/t}
 \left(\frac{a}{a+\beta qt}\right)^2,
 \qquad t>0,
\end{equation}
where $C_{\mathrm a},k,a,\lambda,q>0$ and $\beta\geq0$.  This profile retains a rational approximation of the Robin correction while replacing the full convolution by a single explicit expression.  Its logarithmic derivative is
\begin{equation}\label{eq:reduced-logderivative}
 \frac{d}{dt}\log u_m^{\mathrm a}(t)
 =-k-\frac{3}{2t}+\frac{\lambda^2}{t^2}
 -\frac{2\beta q}{a+\beta qt}.
\end{equation}
The stationary equation is equivalent to
\begin{equation}\label{eq:stationary-reduced}
 k t^2+\frac32t
 +\frac{2\beta q t^2}{a+\beta qt}
 -\lambda^2=0.
\end{equation}
The left-hand side is strictly increasing on $(0,\infty)$, is negative at $0+$, and tends to $+\infty$.  Thus \eqref{eq:asymptotic-profile} has a unique critical point, which is its strict global maximum.  This elementary monotonicity is one reason that the reduced profile is effective for deriving explicit or polynomial peak-time relations.

Theorem~\ref{thm:intro-main} concerns a different object: the exact convolution \eqref{eq:exact-response-integral}.  Its conclusion is valid before passing to \eqref{eq:asymptotic-profile} and without the geometric or scaling assumptions used to justify that approximation.  The two analyses are complementary.  The asymptotic profile supplies explicit relations useful for inversion, while the exact GGC theorem guarantees that the underlying Robin response possesses one and only one peak throughout the full admissible parameter range.

The proof may be summarized by the structural chain
\begin{align*}
 \text{Stieltjes representation of }\Kb
 &\Longrightarrow \Kb\text{ is HCM},\\
 &\Longrightarrow M_{A_d}^{-1}h_{A_d},M_{A_s}^{-1}h_{A_s}\text{ are HCM densities},\\
 &\Longrightarrow u_m\text{ is GGC up to normalization},\\
 &\Longrightarrow \widetilde u_m\text{ is bell-shaped},\\
 &\Longrightarrow u_m\text{ is strictly unimodal}.
\end{align*}
An additional convolution with the exponential density $f_\ell$ gives the same chain for every positive fluorescence lifetime.  This identifies a probability class behind peak-time uniqueness and separates the exact shape theorem from the asymptotic calculations used to obtain explicit inversion formulas.

\section*{CONCLUSION}
The exact time-domain boundary response generated by a single fluorescent point target in the half-space has one and only one peak for every admissible Robin parameter and every admissible source--target--detector geometry.  The result is not obtained by differentiating an approximate peak-time formula.  Instead, the Robin correction admits an explicit Stieltjes representation with an HCM mixing density, the two propagation factors become GGC densities after normalization, and convolution closure places the full response in the same class.  Bell-shape then controls all derivative sign changes, while the small-time derivative signs---proved by sectorial analyticity and Cauchy estimates---and real analyticity exclude tangential critical points.  The same mechanism remains valid for every positive fluorescence lifetime.  Thus the exact theorem supplies a model-level justification for treating the peak time as a single-valued boundary datum beyond the asymptotic regimes used to derive explicit inversion formulas.  Natural next questions are whether comparable HCM--GGC structures persist for more general geometries or optical models, and how the resulting peak-time map can be incorporated into quantitative stability and reconstruction analyses.  These extensions require separate arguments and are not consequences of the present single-point-target theorem.

\section*{ACKNOWLEDGMENTS}
Junyong Eom was supported by the JSPS KAKENHI (Grant Number 25K17291). Jinwan Park was supported by Basic Science Research Program through the National Research Foundation of Korea (NRF) funded by the Ministry of Education (RS-2025-25415411).

\section*{AUTHOR DECLARATIONS}
\subsection*{Conflict of Interest}
The authors have no conflicts to disclose.
\subsection*{Author Contributions}
Junyong Eom: Conceptualization; Investigation; Writing -- original draft; Writing -- review \& editing. Jinwan Park: Formal analysis; Validation; Methodology; Writing -- review \& editing.

\section*{DATA AVAILABILITY}
Data sharing is not applicable to this article as no new data were created or analyzed in this study.

\appendix

\section{Derivation of the exact model}\label{app:model}
We record the derivation of \eqref{eq:intro-convolution}--\eqref{eq:intro-Robin} from the Robin half-space Green function, so that the exactness of the forward model is documented within the paper.  The underlying optical modeling is discussed in Refs.~\onlinecite{HaskellSvaasandTsayFengMcAdamsTromberg1994,MachidaNakamura2020,EomMachidaNakamuraNishimuraSun2023}.

Let
\[
 \Gamma_k(x,t):=e^{-kt}(4\pi qt)^{-3/2}
 \exp\!\left(-\frac{|x|^2}{4qt}\right)
\]
be the whole-space fundamental solution of \eqref{eq:diffusion-operator}.  For the half-space $\Omega=\{x_3>0\}$ the outward normal is $\nu=-e_3$, so the Robin condition is $-\partial_{x_3}G+\beta G=0$ on $\{x_3=0\}$.  Writing $y^*:=(y_1,y_2,-y_3)$ for the reflected point, the Green function is
\begin{equation}\label{eq:Robin-Green}
 G(x,y,t)=\Gamma_k(x-y,t)+\Gamma_k(x-y^*,t)
 -2\beta\int_0^\infty e^{-\beta r}
 \Gamma_k(x-y^*+re_3,t)\,dr.
\end{equation}
Indeed, the right-hand side solves the diffusion equation and has the correct initial singularity.  At $x_3=0$, differentiation under the integral gives
\[
 \partial_{x_3}\Gamma_k(x-y^*+re_3,t)
 =\partial_r\Gamma_k(x-y^*+re_3,t),
\]
and integration by parts in $r$ verifies $\partial_{x_3}G=\beta G$, equivalently the stated outward-normal Robin condition.

Let $x\in\partial\Omega$ and $y=x_c$ with depth $a=(x_c)_3>0$.  If $\rho$ is the horizontal distance between $x$ and $x_c$, then
\[
 |x-y|^2=|x-y^*|^2=\rho^2+a^2,
 \qquad
 |x-y^*+re_3|^2=\rho^2+(a+r)^2.
\]
Completing the square,
\[
 \beta r+\frac{(a+r)^2}{4qt}
 =\frac{(r+a+2\beta qt)^2}{4qt}
 -\frac{(a+2\beta qt)^2-a^2}{4qt},
\]
so
\begin{align}
 \int_0^\infty e^{-\beta r}e^{-(a+r)^2/(4qt)}\,dr
 & =e^{((a+2\beta qt)^2-a^2)/(4qt)}
 \int_{a+2\beta qt}^\infty e^{-z^2/(4qt)}\,dz \notag\\
 & =\sqrt{\pi qt}\,e^{-a^2/(4qt)}
 e^{(a+2\beta qt)^2/(4qt)}
 \operatorname{erfc}\!\left(\frac{a+2\beta qt}{\sqrt{4qt}}\right).
 \label{eq:Robin-integral-eval}
\end{align}
Substitution into \eqref{eq:Robin-Green} gives, for $x\in\partial\Omega$,
\begin{equation}\label{eq:boundary-Green-hA}
 G(x,x_c,t)
 =\frac{2}{(4\pi q)^{3/2}}
 e^{-kt}t^{-3/2}
 e^{-|x-x_c|^2/(4qt)}\Kb(a;t)
 =\frac{2}{(4\pi q)^{3/2}}h_A(t),
\end{equation}
where $A=|x-x_c|^2/(4q)$.  Reciprocity gives the excitation field at the target due to the boundary source $x_s$ as a positive time-independent multiple of $h_{A_s}$, and the emission field at the boundary detector $x_d$ due to the target as a positive multiple of $h_{A_d}$.  In the zero-lifetime point-target model, the measured response is the temporal convolution of the two propagation legs,
\[
 u_m(t)=C_*\int_0^t h_{A_d}(t-s)h_{A_s}(s)\,ds,
\]
which is \eqref{eq:intro-convolution}.  All source strength, fluorescence yield, and time-independent Green-function prefactors are absorbed into $C_*>0$.

\section{Differentiated small-time asymptotics}\label{app:asymptotics}
This appendix proves the analyticity claim in Lemma~\ref{lem:basic-properties} and the differentiated expansions \eqref{eq:um-logder-small}--\eqref{eq:um-second-small}.  The argument extends \eqref{eq:um-small-time} uniformly to a complex sector and then applies Cauchy estimates.  Throughout the appendix, powers of $t$ on the right half-plane are taken with the principal branch.

\begin{lemma}[The Robin factor in the right half-plane]\label{lem:Robin-sector}
Let
\[
 \mathcal D_g:=\mathbb C\setminus\left(-\infty,-\frac{a^2}{4q}\right]
\]
and use the principal square root on $\mathbb C\setminus(-\infty,0]$.  Define
\[
 g(z):=\exp\!\left[-\beta\bigl(\sqrt{a^2+4qz}-a\bigr)\right],
 \qquad z\in\mathcal D_g.
\]
Then $g$ is holomorphic on $\mathcal D_g$, an open set containing the closed right half-plane, and $|g(z)|\leq e^{\beta a}$ for $\operatorname{Re}z\geq0$.  Moreover, there is $C_g>0$ such that
\begin{equation}\label{eq:g-quadratic-bound}
 \left|g(z)-1+\frac{2\beta q}{a}z\right|
 \leq C_g|z|^2,
 \qquad \operatorname{Re}z\geq0.
\end{equation}
If the continuous boundary value $\Kb(a;0):=1$ is used, then
\begin{equation}\label{eq:K-u-representation}
 \Kb(a;t)=\int_0^\infty e^{-u}g(tu)\,du,
 \qquad \operatorname{Re}t\geq0,
\end{equation}
where the right-hand side is holomorphic for $\operatorname{Re}t>0$, agrees with the original Robin factor for real $t>0$, and satisfies
\begin{equation}\label{eq:K-sector-expansion}
 \left|\Kb(a;t)-1+\frac{2\beta q}{a}t\right|
 \leq2C_g|t|^2,
 \qquad \operatorname{Re}t\geq0.
\end{equation}
For $\beta=0$ these statements hold with $g\equiv1$.
\end{lemma}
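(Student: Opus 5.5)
Let $w(z):=a^2+4qz$.  Holomorphy of $g$ on $\mathcal D_g$ is immediate: for $z\in\mathcal D_g$ we have $w(z)\in\mathbb C\setminus(-\infty,0]$, where the principal square root is holomorphic.  If $\operatorname{Re}z\geq0$, then $\operatorname{Re}w\geq a^2>0$, so the closed right half-plane lies in $\mathcal D_g$.  Moreover, since $\mathcal D_g$ is open and contains $\{\operatorname{Re}z\geq0\}$, it contains a neighborhood of it.

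For the modulus bound, note that $\operatorname{Re}\sqrt{w}\geq0$ for the principal branch, so
\[
 |g(z)|=e^{\beta a}e^{-\beta\operatorname{Re}\sqrt{w}}\leq e^{\beta a}.
\]

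For \eqref{eq:g-quadratic-bound}, set $R(z):=g(z)-1+\frac{2\beta q}{a}z$.  Since $g(0)=1$, $R$ vanishes at $0$, and
\[
 g'(z)=-\frac{2\beta q}{\sqrt{w}}\,g(z)
\]
gives $g'(0)=-2\beta q/a$.  Hence $R(0)=R'(0)=0$ and $R(z)/z^2$ is holomorphic near $0$.  I split the half-plane into two regions.
\begin{itemize}
\item On $\{|z|\leq r_0\}\cap\{\operatorname{Re}z\geq0\}$, with $r_0=a^2/(8q)$, the disc of radius $2r_0$ lies inside $\mathcal D_g$.  Cauchy's estimate, or Taylor's formula with integral remainder, bounds $|R(z)|\leq C|z|^2$ there, using $\sup|g''|$ on that disc.
\item On $\{|z|\geq r_0,\ \operatorname{Re}z\geq0\}$, the modulus bound gives
\[
 |R(z)|\leq e^{\beta a}+1+\frac{2\beta q}{a}|z|\leq C'|z|^2,
\]
because $|z|\geq r_0$.
\end{itemize}
Taking $C_g$ to be the larger of the two constants proves \eqref{eq:g-quadratic-bound}.

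For \eqref{eq:K-u-representation}, substitute $x=tu$ in \eqref{eq:inverse-laplace-form} for real $t>0$.  This gives $\Kb(a;t)=\int_0^\infty e^{-u}g(tu)\,du$.  For $\operatorname{Re}t\geq0$ and $u\geq0$ we have $\operatorname{Re}(tu)\geq0$, so the integrand is bounded by the integrable function $e^{\beta a}e^{-u}$ uniformly in $t$.  The right-hand side is therefore continuous on the closed half-plane, and it is holomorphic on the open half-plane by Morera's theorem combined with Fubini, or by differentiation under the integral with the same dominating function.  At $t=0$ it equals $\int_0^\infty e^{-u}\,du=1$, consistent with the convention $\Kb(a;0)=1$.

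For \eqref{eq:K-sector-expansion}, integrate \eqref{eq:g-quadratic-bound} against $e^{-u}$.  Using $\int_0^\infty e^{-u}u\,du=1$ and $\int_0^\infty e^{-u}u^2\,du=2$,
\[
 \left|\Kb(a;t)-1+\frac{2\beta q}{a}t\right|\leq\int_0^\infty e^{-u}C_g|t|^2u^2\,du=2C_g|t|^2.
\]

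When $\beta=0$ we have $g\equiv1$, so $R\equiv0$ and every statement above is trivial.

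The only step that needs care is the uniformity of the quadratic bound over the unbounded half-plane.  This is handled by the local/global split: Cauchy estimates on a fixed disc inside $\mathcal D_g$ for small $|z|$, and the global bound $|g|\leq e^{\beta a}$ for $|z|\geq r_0$.
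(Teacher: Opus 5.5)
Your proof is correct and follows the paper's argument essentially step by step. It uses the same branch and modulus argument, and the same local/global split for the quadratic bound: you use a Taylor remainder on $|z|\le r_0$, where the paper uses the removable singularity of $(g(z)-1+\frac{2\beta q}{a}z)/z^2$ on $|z|\le 1$. The substitution $x=tu$, the dominating function $e^{\beta a}e^{-u}$, and the integration of the quadratic bound are also the same.

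One small wording fix: take $\sup|g''|$ over the closed disc $|z|\le r_0$, which is a compact subset of $\mathcal D_g$ (or bound it there by Cauchy estimates on circles of radius $r_0/2$). Do not take it over the disc of radius $2r_0=a^2/(4q)$, whose boundary touches the branch point $-a^2/(4q)$, where $g''$ blows up when $\beta>0$.
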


\begin{proof}
The affine map $z\mapsto a^2+4qz$ sends $\mathcal D_g$ into $\mathbb C\setminus(-\infty,0]$, so the stated branch makes $g$ holomorphic on $\mathcal D_g$.  If $\operatorname{Re}z\geq0$, then $\operatorname{Re}(a^2+4qz)\geq a^2>0$ and the principal square root has positive real part.  Hence $|g(z)|\leq e^{\beta a}$.

Set $c:=2\beta q/a$.  Since $g(0)=1$ and $g'(0)=-c$, the quotient
\[
 H(z):=\frac{g(z)-1+cz}{z^2}
\]
has a removable singularity at $z=0$.  It is therefore bounded on every compact subset of $\mathcal D_g$, in particular on $\{\operatorname{Re}z\geq0,\ |z|\leq1\}$.  For $\operatorname{Re}z\geq0$ and $|z|\geq1$,
\[
 \frac{|g(z)-1+cz|}{|z|^2}
 \leq \frac{e^{\beta a}+1}{|z|^2}+\frac{c}{|z|}
 \leq e^{\beta a}+1+c.
\]
Increasing the compact-set bound if necessary proves \eqref{eq:g-quadratic-bound} on the entire closed right half-plane.

For real $t>0$, \eqref{eq:K-u-representation} follows from \eqref{eq:inverse-laplace-form} by the substitution $x=tu$.  For $\operatorname{Re}t\geq0$, the integrand is bounded by $e^{-u}e^{\beta a}$.  On each compact subset of $\{\operatorname{Re}t>0\}$ it is holomorphic in $t$ and has the same integrable majorant, so dominated holomorphic parameter integration proves analyticity there.  Finally, integrating \eqref{eq:g-quadratic-bound} against $e^{-u}du$ and using $\int_0^\infty u^2e^{-u}du=2$ gives \eqref{eq:K-sector-expansion}, including the boundary value at $t=0$.
\end{proof}

\begin{lemma}[Sectorial analyticity and uniform expansion]\label{lem:sectorial-analyticity}
The response extends holomorphically to $\operatorname{Re}t>0$ through
\begin{align}
 u_m(t)={}&C_*e^{-kt}t^{-2}\int_0^1[\theta(1-\theta)]^{-3/2}
 \exp\!\left(-\frac{\phi(\theta)}{t}\right) \notag\\
 &\hspace{23mm}\times
 \Kb(a;t\theta)\Kb(a;t(1-\theta))\,d\theta,
 \label{eq:scaled-complex-response}
\end{align}
where
\begin{equation}\label{eq:phi-L-definition}
 \phi(\theta):=\frac{A_d}{1-\theta}+\frac{A_s}{\theta},
 \qquad
 L:=\sqrt{A_d}+\sqrt{A_s}.
\end{equation}
Thus $L^2=\min_{0<\theta<1}\phi(\theta)$.  For $\operatorname{Re}t>0$, define
\begin{align}
 J(t)&:=\int_0^1[\theta(1-\theta)]^{-3/2}
 e^{-\phi(\theta)/t}\,d\theta,
 \label{eq:J-definition}\\
 \mathcal R(t)&:=\int_0^1[\theta(1-\theta)]^{-3/2}
 e^{-\phi(\theta)/t}
 \bigl\{\Kb(a;t\theta)\Kb(a;t(1-\theta))-1\bigr\}\,d\theta,
 \label{eq:R-definition}\\
 E(t)&:=\frac{\mathcal R(t)}{J(t)}.
 \label{eq:E-definition}
\end{align}
For every $\delta\in(0,\pi/2)$ there are $t_0,C_\delta>0$ such that, on
\[
 S_\delta:=\{t\in\mathbb C:0<|t|\leq t_0,\ |\arg t|\leq\delta\},
\]
\begin{equation}\label{eq:sectorial-um-expansion}
 u_m(t)=C_0e^{-kt}t^{-3/2}e^{-L^2/t}\bigl(1+E(t)\bigr),
 \qquad |E(t)|\leq C_\delta|t|,
\end{equation}
where $C_0=C_*\sqrt\pi L/\sqrt{A_dA_s}$ and $E$ is holomorphic on $\operatorname{Re}t>0$.
\end{lemma}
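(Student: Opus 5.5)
The plan starts from the convolution integral \eqref{eq:exact-response-integral}. Substituting $s=t\theta$ gives $ds=t\,d\theta$, and $((t-s)s)^{-3/2}=t^{-3}[\theta(1-\theta)]^{-3/2}$. This produces \eqref{eq:scaled-complex-response} for real $t>0$.

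Next comes holomorphy on $\operatorname{Re}t>0$. For fixed $\theta$, Lemma~\ref{lem:Robin-sector} makes $\Kb(a;t\theta)$ and $\Kb(a;t(1-\theta))$ holomorphic, and bounded by $e^{\beta a}$ since $\operatorname{Re}(t\theta)\ge0$. Also $|e^{-\phi(\theta)/t}|=e^{-\phi(\theta)\operatorname{Re}(1/t)}$, and $\operatorname{Re}(1/t)=\operatorname{Re}t/|t|^2$ is bounded below by a positive constant on compact subsets of the half-plane. Hence the integrand is dominated by
\[
 e^{2\beta a}[\theta(1-\theta)]^{-3/2}e^{-c\,\phi(\theta)},
\]
which is integrable because $\phi$ blows up like $1/\theta$ and $1/(1-\theta)$ at the endpoints. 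Dominated holomorphic parameter integration then gives holomorphy of $u_m$, $J$ and $\mathcal R$. Since $J$ is a Laplace-type integral of a positive weight, the identity theorem applied to \eqref{eq:stable-convolution} extends
\[
 J(t)=\frac{\sqrt\pi L}{\sqrt{A_dA_s}}\,t^{1/2}e^{-L^2/t}
\]
holomorphically to $\operatorname{Re}t>0$. In particular $J$ has no zeros, so $E=\mathcal R/J$ is holomorphic, and $u_m=C_*e^{-kt}t^{-2}(J+\mathcal R)$ becomes exactly \eqref{eq:sectorial-um-expansion} with the stated $C_0$. The identity $L^2=\min\phi$ is elementary calculus; the minimizer is $\theta_*=\sqrt{A_s}/L$.

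The main work is the bound $|E(t)|\le C_\delta|t|$ on $S_\delta$. By \eqref{eq:K-sector-expansion} and boundedness of $\Kb$ on the closed half-plane,
\[
 \bigl|\Kb(a;t\theta)\Kb(a;t(1-\theta))-1\bigr|\le C|t|
\]
uniformly in $\theta\in(0,1)$ for $|t|\le t_0$. Therefore
\[
 |\mathcal R(t)|\le C|t|\int_0^1[\theta(1-\theta)]^{-3/2}e^{-\phi(\theta)\operatorname{Re}(1/t)}\,d\theta=C|t|\,J\bigl(1/\operatorname{Re}(1/t)\bigr),
\]
where the last step uses the closed form for real arguments. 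The difficulty is that this has to be compared with $|J(t)|$. On the sector, $\operatorname{Re}(1/t)=\cos(\arg t)/|t|$, so the real-argument value carries the factor $e^{-L^2\operatorname{Re}(1/t)}$ times $(\operatorname{Re}(1/t))^{-1/2}$. On the other side, $|J(t)|=c\,|t|^{1/2}e^{-L^2\operatorname{Re}(1/t)}$. The exponential factors cancel exactly, and the algebraic prefactors differ only by $(\cos\delta)^{-1/2}$. This yields $|E|\le C(\cos\delta)^{-1/2}|t|$.

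I expect this comparison to be the main obstacle. A naive modulus bound loses the oscillation of $e^{-\phi/t}$ and could lose an exponential factor. The plan avoids that through the exact formula for $J$: at the minimum of $\phi$, the modulus of the oscillatory integral is comparable to the integral of the modulus, up to a $\delta$-dependent constant. If the constant-tracking fails, the fallback is a steepest-descent estimate near $\theta_*$, which gives the same order.
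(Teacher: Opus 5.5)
Your proposal is correct and follows the paper's proof essentially step for step. The shared steps are the substitution $s=t\theta$, dominated holomorphic parameter integration using $\operatorname{Re}(1/t)\geq c_K$ on compact sets, extending the closed form of $J$ by the identity theorem, and bounding $|\mathcal R(t)|$ by $C_1|t|$ times the real-argument value $J(1/\sigma)$ with $\sigma=\operatorname{Re}(1/t)$, which yields $|E(t)|\leq C_1|t|/\sqrt{\cos(\arg t)}$. The steepest-descent fallback is unnecessary, because the exact formula for $J$ already makes the exponential factors cancel identically.
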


\begin{proof}
The substitution $s=t\theta$ in \eqref{eq:exact-response-integral} gives \eqref{eq:scaled-complex-response} for real $t>0$.  Let $K\Subset\{\operatorname{Re}t>0\}$.  Since $\operatorname{Re}(1/t)$ is positive and continuous on $K$, there is $c_K>0$ such that $\operatorname{Re}(1/t)\geq c_K$ on $K$.  Lemma~\ref{lem:Robin-sector} gives a uniform bound for the Robin factors, while
\[
 \left|e^{-\phi(\theta)/t}\right|
 \leq e^{-c_K\phi(\theta)}.
\]
Because $\phi(\theta)\to\infty$ as $\theta\downarrow0$ or $\theta\uparrow1$, the function $[\theta(1-\theta)]^{-3/2}e^{-c_K\phi(\theta)}$ is integrable on $(0,1)$.  Dominated holomorphic parameter integration therefore proves that the right-hand side of \eqref{eq:scaled-complex-response}, as well as $J$ and $\mathcal R$, is holomorphic on the right half-plane.  Since it agrees with the original response on the positive real axis, it is the required analytic continuation.

Identity \eqref{eq:stable-convolution}, after $s=t\theta$, yields for real $t>0$
\begin{equation}\label{eq:J-exact}
 J(t)=\frac{\sqrt\pi L}{\sqrt{A_dA_s}}\,t^{1/2}e^{-L^2/t}.
\end{equation}
Both sides are holomorphic on the right half-plane, with the principal branch of $t^{1/2}$, and hence \eqref{eq:J-exact} holds there by the identity theorem.  In particular, $J$ has no zeros.  Thus $E=\mathcal R/J$ is holomorphic, and \eqref{eq:scaled-complex-response} becomes exactly
\[
 u_m(t)=C_*e^{-kt}t^{-2}J(t)\bigl(1+E(t)\bigr).
\]

By \eqref{eq:K-sector-expansion} and the uniform boundedness of $\Kb(a;t\alpha)$ for $0\leq\alpha\leq1$, after decreasing $t_0$ if necessary there is $C_1>0$ such that
\begin{equation}\label{eq:K-product-sector}
 \left|\Kb(a;t\theta)\Kb(a;t(1-\theta))-1\right|
 \leq C_1|t|
\end{equation}
uniformly for $\theta\in[0,1]$ and $t\in S_\delta$.  Put
\[
 \sigma:=\operatorname{Re}(1/t)=\frac{\cos(\arg t)}{|t|}.
\]
Using \eqref{eq:J-exact} at the positive real number $1/\sigma$ gives
\[
 \int_0^1[\theta(1-\theta)]^{-3/2}e^{-\sigma\phi(\theta)}\,d\theta
 =\frac{\sqrt\pi L}{\sqrt{A_dA_s}}\,\sigma^{-1/2}e^{-L^2\sigma}.
\]
Consequently,
\[
 |\mathcal R(t)|
 \leq C_1|t|\frac{\sqrt\pi L}{\sqrt{A_dA_s}}\,
 \sigma^{-1/2}e^{-L^2\sigma},
\]
whereas \eqref{eq:J-exact} gives
\[
 |J(t)|=\frac{\sqrt\pi L}{\sqrt{A_dA_s}}\,
 |t|^{1/2}e^{-L^2\sigma}.
\]
It follows that
\[
 |E(t)|\leq C_1|t|^{1/2}\sigma^{-1/2}
 =\frac{C_1|t|}{\sqrt{\cos(\arg t)}}
 \leq\frac{C_1}{\sqrt{\cos\delta}}|t|,
\]
which proves \eqref{eq:sectorial-um-expansion}.
\end{proof}

\begin{lemma}[Cauchy estimates and derivative signs]\label{lem:Cauchy-derivatives}
There are constants $C',C''>0$ and $t_1>0$ such that, for real $t\in(0,t_1)$,
\begin{equation}\label{eq:E-Cauchy-bounds}
 |E'(t)|\leq C',
 \qquad
 |E''(t)|\leq C''t^{-1}.
\end{equation}
Consequently, \eqref{eq:um-logder-small}, \eqref{eq:um-second-small}, and \eqref{eq:initial-signs} hold.
\end{lemma}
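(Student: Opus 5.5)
The plan is to apply Cauchy's integral formula to the holomorphic function $E$ from Lemma~\ref{lem:sectorial-analyticity} on discs adapted to the sector $S_\delta$.

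\textbf{Step 1: Cauchy estimates.} Fix $\delta=\pi/4$ with the corresponding $t_0$ and $C_\delta$. For real $t\in(0,t_1)$, where $t_1$ is small relative to $t_0$ (say $t_1=t_0/2$), take the disc $D_t:=\{|z-t|\le t\sin(\delta)/2\}$. This disc lies in $S_\delta$: every point has argument at most $\delta$ and modulus at most $\tfrac32 t<t_0$. On $D_t$ we have $|E(z)|\le C_\delta|z|\le \tfrac32 C_\delta t$. The Cauchy estimates on a disc of radius $r=ct$ then give
\[
|E'(t)|\le \frac{\sup_{D_t}|E|}{r}\le C',
\qquad
|E''(t)|\le \frac{2\sup_{D_t}|E|}{r^2}\le C''t^{-1}.
\]
This proves \eqref{eq:E-Cauchy-bounds}. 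Also note that $|E(t)|\le C_\delta t<1/2$ for small $t$, so $1+E(t)$ stays bounded away from zero.

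\textbf{Step 2: Logarithmic derivative.} On $(0,t_1)$, \eqref{eq:sectorial-um-expansion} gives
\[
\log u_m(t)=\log C_0-kt-\tfrac32\log t-\frac{L^2}{t}+\log(1+E(t)).
\]
Differentiating,
\[
\frac{u_m'}{u_m}=\frac{L^2}{t^2}-\frac{3}{2t}-k+\frac{E'}{1+E},
\]
and the last term is $O(1)$ by Step 1. This is \eqref{eq:um-logder-small}.

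\textbf{Step 3: Second derivative.} Use the identity
\[
\frac{u_m''}{u_m}=\Bigl(\frac{u_m'}{u_m}\Bigr)'+\Bigl(\frac{u_m'}{u_m}\Bigr)^2 .
\]
Differentiating the expression in Step 2 gives
\[
\Bigl(\frac{u_m'}{u_m}\Bigr)'=-\frac{2L^2}{t^3}+\frac{3}{2t^2}+\frac{E''}{1+E}-\frac{(E')^2}{(1+E)^2}=O(t^{-3}),
\]
where $E''=O(t^{-1})$ and $E'=O(1)$ by Step 1. The square $\bigl(\frac{u_m'}{u_m}\bigr)^2$ equals $\frac{L^4}{t^4}+O(t^{-3})$. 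Adding the two yields \eqref{eq:um-second-small}.

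\textbf{Step 4: Initial signs.} Since $u_m>0$ and $L>0$, the leading terms $L^2/t^2$ and $L^4/t^4$ dominate their remainders for $t<\varepsilon$. Hence $u_m'>0$ and $u_m''>0$ there, which is \eqref{eq:initial-signs}.

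The only delicate point is Step 1, specifically choosing discs whose radius is proportional to $t$ and which stay inside the sector where the uniform bound $|E|\le C_\delta|z|$ holds. That proportionality is exactly why the bounds scale as $O(1)$ for $E'$ and $O(t^{-1})$ for $E''$. The rest is routine algebra.
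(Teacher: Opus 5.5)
Your proposal is correct and follows essentially the same route as the paper. The paper likewise applies Cauchy estimates to $E$ on discs $\overline D(t,rt)$ with $r=\tfrac12\sin(\pi/4)$ inside $S_{\pi/4}$, then logarithmically differentiates the factorization \eqref{eq:sectorial-um-expansion} and uses $u_m''/u_m=(u_m'/u_m)^2+(u_m'/u_m)'$; your coarser bound $(u_m'/u_m)'=O(t^{-3})$, in place of the paper's $-2L^2/t^3+O(t^{-2})$, still suffices for \eqref{eq:um-second-small}.
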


\begin{proof}
Fix $\delta_0=\pi/4$ in Lemma~\ref{lem:sectorial-analyticity} and choose $r=(\sin\delta_0)/2$.  The distance from the positive point $t$ to either boundary ray of the sector $|\arg w|<\delta_0$ is $t\sin\delta_0$, so $r<\sin\delta_0$ places the disk $\overline D(t,rt)$ inside that angular sector.  If additionally $(1+r)t\leq t_0$, then $|w|\leq t_0$ throughout the disk.  Hence, for all sufficiently small $t>0$, $\overline D(t,rt)\subset S_{\delta_0}$ and
\[
 |E(w)|\leq C_{\delta_0}|w|\leq C_{\delta_0}(1+r)t
 \qquad (w\in\overline D(t,rt)).
\]
Cauchy's estimates therefore give
\[
 |E'(t)|\leq\frac{C_{\delta_0}(1+r)t}{rt},
 \qquad
 |E''(t)|\leq\frac{2C_{\delta_0}(1+r)t}{(rt)^2},
\]
which proves \eqref{eq:E-Cauchy-bounds}.

Shrink $t_1$ if necessary so that $|E(t)|\leq1/2$ on $(0,t_1)$.  For positive real $t$, define
\[
 Q(t):=\frac{E'(t)}{1+E(t)}.
\]
The Cauchy bounds imply $Q(t)=O(1)$ and, without differentiating an unspecified remainder,
\[
 Q'(t)=\frac{E''(t)}{1+E(t)}
 -\frac{(E'(t))^2}{(1+E(t))^2}
 =O(t^{-1}).
\]
Logarithmic differentiation of the exact factorization \eqref{eq:sectorial-um-expansion} now gives
\[
 \frac{u_m'(t)}{u_m(t)}
 =-k-\frac{3}{2t}+\frac{L^2}{t^2}+Q(t)
 =\frac{L^2}{t^2}-\frac{3}{2t}+O(1),
\]
which is \eqref{eq:um-logder-small}.  Differentiating this exact logarithmic identity through $Q$ yields
\[
 \left(\frac{u_m'}{u_m}\right)'(t)
 =-\frac{2L^2}{t^3}+\frac{3}{2t^2}+Q'(t)
 =-\frac{2L^2}{t^3}+O(t^{-2}).
\]
Therefore
\begin{align*}
 \frac{u_m''(t)}{u_m(t)}
 &=\left(\frac{u_m'(t)}{u_m(t)}\right)^2
 +\left(\frac{u_m'}{u_m}\right)'(t)\\
 &=\left(\frac{L^2}{t^2}+O(t^{-1})\right)^2
 -\frac{2L^2}{t^3}+O(t^{-2})
 =\frac{L^4}{t^4}+O(t^{-3}).
\end{align*}
This proves \eqref{eq:um-second-small}; positivity of the leading terms in the two differentiated expansions gives \eqref{eq:initial-signs}.
\end{proof}

%

\end{document}